\theoremstyle{plain}
\newtheorem{theorem}{Theorem}
\newtheorem{lemma}[theorem]{Lemma}
\newtheorem{proposition}[theorem]{Proposition}
\newtheorem{corollary}{Corollary}
\theoremstyle{remark}
\newtheorem{remark}{Remark}
\newtheorem*{remark*}{Remark}
\theoremstyle{definition}
\newtheorem*{definition*}{Definition}
\newtheorem*{problem*}{Problem}
\def\Lanoise{\Lambda_{\bar q}^\eta}
\def\C{\mathbb{C}}
\def\N{\mathbb{N}}
\def\R{\mathbb{R}}
\def\Z{\mathbb{Z}}
\def\H{{L^2(\T^d)}}
\def\W{\mathcal{W}}
\def\T{\mathbb{T}}
\def\supp{{\rm supp}}
\def\pv{{\rm p.v.}}
\renewcommand{\epsilon}{\varepsilon}
\def\q{q^\varepsilon_\eta}
\newcommand{\norm}[1]{{\left\Vert {#1}\right\Vert}}
\begin{document}
\title[Calder\'on's inverse problem with finitely many measurements II]{Calder\'on's Inverse Problem\\ with a Finite Number of Measurements II: independent data}

\thanks{This work has been carried out at the Machine Learning Genoa (MaLGa) center, Universit\`a di Genova (IT). The authors are members of the ``Gruppo Nazionale per l'Analisi Matematica, la Probabilit\`a e le loro Applicazioni'' (GNAMPA), of the ``Istituto Nazionale per l'Alta Matematica'' (INdAM). GSA is supported by a UniGe starting grant ``curiosity driven''.}

\author{Giovanni S. Alberti}
\address{MaLGa Center, Department of Mathematics, University of Genoa, Via Dodecaneso 35, 16146 Genova, Italy.}
\email{giovanni.alberti@unige.it}

\author{Matteo Santacesaria}
\address{MaLGa Center, Department of Mathematics, University of Genoa, Via Dodecaneso 35, 16146 Genova, Italy.}
\email{matteo.santacesaria@unige.it}

\subjclass[2010]{35R30}

\date{March 9, 2020}

\dedicatory{Dedicated to Sergio Vessella on the occasion of his 65\textsuperscript{th} birthday}

\begin{abstract}
We prove a local Lipschitz stability estimate for Gel'fand-Cal\-der\'on's inverse problem for the Schr\"odinger equation. The main novelty is that only a finite number of boundary input data is available, and those are independent of the unknown potential, provided it belongs to a known finite-dimensional subspace of $L^\infty$. A similar result for Calder\'on's problem is obtained as a corollary. This improves upon two previous results of the authors on several aspects, namely the number of measurements and the stability with respect to mismodeling errors. A new iterative reconstruction scheme based on the stability result is also presented, for which we prove exponential convergence in the number of iterations and stability with respect to noise in the data and to mismodeling errors.
\end{abstract}

\keywords{Gel'fand-Calder\'on problem, Calder\'on problem, inverse conductivity problem, electrical impedance tomography, local uniqueness, complex geometrical optics solutions, Lipschitz stability, reconstruction algorithm}

\maketitle

\section{Introduction}

Consider the Schr\"odinger equation
\begin{equation}\label{eq:schr}
(-\Delta +q ) u = 0 \qquad \text{in } \Omega,
\end{equation}
where $\Omega \subseteq \R^d$, $d \ge 3$, is an open bounded domain with Lipschitz boundary and $q \in L^{\infty}(\Omega)$ is a potential. Assuming that 
\begin{equation}\label{hyp:dir}
0\; \text{is not a Dirichlet eigenvalue for } -\Delta + q  \text{ in } \Omega,
\end{equation}
 it is possible to define the Dirichlet-to-Neumann (DN) map
\begin{equation*}
\Lambda_q \colon H^{1/2}(\partial \Omega) \to H^{-1/2}(\partial \Omega),\qquad  u|_{\partial \Omega} \mapsto \left.\frac{\partial u}{\partial \nu}\right|_{\partial \Omega},
\end{equation*}
where $\nu$ is the unit outward normal to $\partial \Omega$.
Gel'fand-Calder\'on's inverse problem consists of the reconstruction of $q$ from the knowledge of the associated DN map $\Lambda_q$. 

Thanks to a change of variables (see, e.g., \cite{Sylvester1987}), this can be seen as a generalization of Calder\'on's inverse conductivity problem \cite{calderon1980}, where one wants to determine a conductivity distribution $\sigma \in L^\infty(\Omega)$ satisfying
\begin{equation}\label{eq:lambda}
\lambda^{-1} \leq \sigma \leq \lambda \quad \text{almost everywhere in } \Omega
\end{equation}
for some $\lambda>1$, from the DN map
\[
\Lambda_\sigma\colon u|_{\partial \Omega} \mapsto \sigma \left.\partial_\nu u\right|_{\partial \Omega},
\]
where $u$ solves the conductivity equation $-\nabla \cdot (\sigma \nabla u) = 0$ in $\Omega$. For further details, the reader is referred to the review papers \cite{eit-1999,2002-borcea,2009-uhlmann,adler-gaburro-lionheart-2015,uhlmann-2019} and to the references therein.

The DN maps represent an infinite number of boundary measurements, a clearly unrealistic setting. In our previous work \cite{alberti2018} we  showed uniqueness and Lipschitz stability for both inverse problems when only a finite number of measurements is available, under the assumption that $q$ (or $\frac{\Delta\sqrt{\sigma}}{\sqrt{\sigma}}$) belongs to a known finite-dimensional subspace $\W$ of $L^\infty(\Omega)$. Note that Lipschitz stability results were previously known only with infinitely many measurements \cite{2005-alessandrini-vessella,2011-beretta-francini,beretta2013,beretta2015,gaburro2015,beretta2017,alessandrini2017,
alessandrini2017u,alessandrini2018,beretta2019}. In the case of a finite number of measurements, few Lipschitz stability results have recently appeared \cite{Harrach_2019,ruland2018,alberti2019infinite,harrach2019uniqueness}.

The main drawback of \cite{alberti2018} is that the boundary input data depend on the unknown to be reconstructed (even though their number is given a priori). This issue was solved in \cite{Harrach_2019}  for electrical impedance tomography (EIT), at the price of a non constructive choice for the number of measurements. In our recent work \cite{alberti2019infinite}, we showed that it is possible to give a priori both the number and the type of measurements for a large class of inverse problems. However, with respect to \cite{alberti2018}, this approach yields a higher number of measurements and a worse Lipschitz stability constant in case of mismodeling errors, namely, when $q$ (or $\frac{\Delta\sqrt{\sigma}}{\sqrt{\sigma}}$) is not exactly in $\W$.

In the present work we continue along the approach of \cite{alberti2018}, based on the nonlinear  method tailored for Calder\'on's problem. We consider both Gel'fand-Calder\'on's and Calder\'on's problems, and prove a local Lipschitz stability result and derive a nonlinear iterative reconstruction algorithm, with few measurements given a priori and a good stability with respect to mismodeling errors (and so, keeping the best aspects of the previous results). We also prove that the reconstruction algorithm is stable with respect to noise in the data, which can be seen as a first step towards a new regularization strategy for these problems. This is achieved at the expense of a local argument, namely, it is assumed that the unknown $q$ is sufficiently close to a known potential $q_0$.

The paper is structured as follows. In Section~\ref{sec:main} we state the main results regarding uniqueness and Lipschitz stability. Their proofs are given in Section~\ref{sec:uniqueness}. Section~\ref{sec:recon} is devoted to a new nonlinear reconstruction algorithm for which we prove exponential convergence and stability with respect to noise and to mismodeling errors. The main technical lemmata needed to prove the main results are in Section~\ref{sec:layer} and concern some properties of generalized single and double layer operators.

\section{Main results}\label{sec:main}

It is useful to recall the main result of our previous work \cite{alberti2018}. We first focus on Gel'fand-Calder\'on's problem.
Without loss of generality, we assume that $\Omega\subseteq \mathbb{T}^d$, $d \geq 3$, where $\mathbb{T}=[0,1]$. 
In the following we will extend any function of $L^\infty(\Omega)$ to $L^\infty(\R^d)$ by zero. We assume the a priori upper estimate
$
\norm{q}_\infty \le R
$
for some $R>0$ and that $q$ is well-approximated by  $\W$,  a fixed finite-dimensional subspace of $L^\infty(\Omega)$. 

The method is based on a particular class of solutions to \eqref{eq:schr}, called complex geometrical optics (CGO) solutions \cite{Sylvester1987} (see also \cite{Faddeev1965}).

For  $k \in \Z^d$, choose $\eta, \xi \in \R^d$ such that $|\xi| = |\eta|= 1$ and $\xi \cdot \eta = \xi \cdot k = \eta \cdot k = 0$. For $t\in\R$ define
\begin{equation}\label{eq:zeta}
\zeta^{k,t} = -i (\pi  k + t\xi) + \sqrt{t^2 +\pi^2|k|^2}\eta.
\end{equation}
For every $t\ge c_1$, where $c_1=c_1(R)$ is given in Lemma~\ref{lem:Sqbound} below, we can construct a solution $\psi^{k,t}$ of \eqref{eq:schr} in $\R^d$ (with $q$ extended to $\R^d$ by zero) of the form
\[
\psi_q^{k,t}(x)  = e^{\zeta^{k,t}\cdot x}(1+r_q^{k,t}(x)),\qquad x\in \R^d,
\]
where the remainder term $r_q^{k,t}$ satisfies suitable decay estimates \cite{alberti2018}.
We consider an ordering of the frequencies in $\Z^d$, namely a bijective map $\rho\colon\N\to \Z^d$, $n\mapsto k_n$, such that
\begin{equation}\label{cond:rho}
|k_n| \leq  C_\rho \,n^{1/d},\qquad n\in\mathbb{N},
\end{equation} 
for some  $C_\rho > 0$. Here and in the following we use the notation $\N=\{1,2,\dots\}$. Set $t_n= c(|k_n|^{d}+1)$, where $c\ge c_1$ is a sufficiently large positive constant depending only on $R$. We use the notation
\begin{equation}\label{eq:notation}
\zeta_n=\zeta^{k_n,t_n},\qquad  \psi^n_q = \psi_q^{k_n,t_n},\qquad f^n_q = \psi^n_q|_{\partial\Omega}.
\end{equation}


The main stability result of \cite{alberti2018}, giving Lipschitz stability (and uniqueness) with a finite number of measurements, reads as follows. We use the following notation. Let $P_\W\colon L^2(\T^d)\to L^2(\T^d)$ be the orthogonal projection onto $i(\W)$, where $i\colon L^\infty(\Omega)\to L^2(\T^d)$ is the extension operator by zero. We also set $P_\W^\perp=I-P_\W$. 
 
\begin{theorem}[{\cite[Theorem 2 and Remark 6]{alberti2018}}]\label{thm:stab0}
Take $d\ge 3$ and let $\Omega \subseteq \mathbb{T}^d$ be a  bounded Lipschitz domain and $\W \subseteq L^\infty(\Omega)$ be a finite-dimensional subspace. There exists $N\in\N$ such that the following is true.

For every
$R,\epsilon>0$ and $q_1,q_2 \in L^\infty(\Omega)$ satisfying \eqref{hyp:dir} and
\begin{equation}\label{eq:q1q2}
\norm{q_j}_\infty\le R\quad\text{and}\quad \norm{P_\W^\perp q_j}_{L^2(\T^d)}\le \epsilon,\qquad j=1,2,
\end{equation}
 we have
\begin{equation*}
\|q_2 - q_1\|_{L^2(\Omega)} \leq e^{C N}
\norm{\left(\Lambda_{q_2}f^n_{q_1} - \Lambda_{q_1}f^n_{q_1}\right)_{n=1}^N}_{H^{-1/2}(\partial \Omega)^N} +8\epsilon
\end{equation*}
for some $C>0$ depending only on $\Omega$ and $R$. \end{theorem}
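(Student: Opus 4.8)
The plan is to run the complex geometrical optics (CGO) scheme adapted to the finite-dimensional subspace $\W$: recover finitely many Fourier coefficients of $q_2-q_1$ from the measurements, and then use the finite-dimensionality of $\W$ to pass to an $L^2$ estimate. Fix a frequency $k_n$ and pair the boundary data against a \emph{second} CGO solution. Alongside $\psi^n_{q_1}$ (with exponent $\zeta_n=\zeta^{k_n,t_n}$), I would construct a CGO solution $\tilde\psi^n_{q_2}$ of $(-\Delta+q_2)\tilde\psi^n_{q_2}=0$ with the exponent $\tilde\zeta_n$ obtained from $\zeta^{k_n,t_n}$ by flipping $\xi\mapsto-\xi$ and $\eta\mapsto-\eta$. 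A direct computation using $\xi\cdot\eta=\xi\cdot k_n=\eta\cdot k_n=0$ gives $\tilde\zeta_n\cdot\tilde\zeta_n=0$ (so the construction, valid for $t_n\ge c_1$, applies) and, crucially, $\zeta_n+\tilde\zeta_n=-2\pi i k_n$. A Green/Alessandrini identity, using \eqref{hyp:dir} so that both DN maps and solutions are well defined, then yields
\[
\langle(\Lambda_{q_2}-\Lambda_{q_1})f^n_{q_1},\ \tilde\psi^n_{q_2}|_{\partial\Omega}\rangle_{\partial\Omega}=\int_\Omega (q_2-q_1)\,\psi^n_{q_1}\,\tilde\psi^n_{q_2}\,dx.
\]
Inserting the CGO expansions $\psi^n_{q_1}=e^{\zeta_n\cdot x}(1+r^n_{q_1})$, $\tilde\psi^n_{q_2}=e^{\tilde\zeta_n\cdot x}(1+\tilde r^n_{q_2})$ and using $\zeta_n+\tilde\zeta_n=-2\pi i k_n$, the right-hand side becomes $\widehat{(q_2-q_1)}(k_n)$, the $k_n$-th Fourier coefficient on $\T^d$ of the zero-extended difference, plus a remainder collecting $r^n_{q_1}$, $\tilde r^n_{q_2}$, $r^n_{q_1}\tilde r^n_{q_2}$.

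Next I would estimate both sides. The remainder is controlled by Cauchy--Schwarz together with the decay estimates for $r^n_{q_j}$ from \cite{alberti2018}, giving a bound $\delta_n\,\norm{q_2-q_1}_{L^2(\Omega)}$ with $\delta_n\le C(R)/t_n$; since $t_n=c(|k_n|^d+1)$ and $|k_n|\le C_\rho\,n^{1/d}$, the numbers $\delta_n^2$ are summable with $\sum_n\delta_n^2\le (C(R)/c)^2 K_\rho$ \emph{independently of} $N$, hence can be made small by taking $c$ large. The boundary pairing is bounded by $\norm{(\Lambda_{q_2}-\Lambda_{q_1})f^n_{q_1}}_{H^{-1/2}(\partial\Omega)}\,\norm{\tilde\psi^n_{q_2}}_{H^{1/2}(\partial\Omega)}$, and the real part $-B_n\eta$ of $\tilde\zeta_n$ with $B_n=\sqrt{t_n^2+\pi^2|k_n|^2}\le\sqrt2\,t_n\le Cn$ yields the trace bound $\norm{\tilde\psi^n_{q_2}}_{H^{1/2}(\partial\Omega)}\le e^{Cn}$. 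Hence, for each $n$,
\[
|\widehat{(q_2-q_1)}(k_n)|\le e^{Cn}\,\norm{(\Lambda_{q_2}-\Lambda_{q_1})f^n_{q_1}}_{H^{-1/2}(\partial\Omega)}+\delta_n\,\norm{q_2-q_1}_{L^2(\Omega)}.
\]

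Finally I would exploit finite-dimensionality. Since $w\mapsto(\hat w(k_n))_n$ is injective on the finite-dimensional space $i(\W)\subseteq L^2(\T^d)$, the nested kernels $\{w\in i(\W):\hat w(k_n)=0,\ n\le N\}$ decrease to $\{0\}$, so there is $N$, depending only on $\W$ and $\rho$, with $\norm{w}_{L^2}\le C_\W(\sum_{n\le N}|\hat w(k_n)|^2)^{1/2}$ for all $w\in i(\W)$; this is the $N$ of the statement, fixed before all other constants. Writing $q_2-q_1=P_\W(q_2-q_1)+P_\W^\perp(q_2-q_1)$, applying this to $w=P_\W(q_2-q_1)$, bounding $\widehat{P_\W(q_2-q_1)}(k_n)-\widehat{(q_2-q_1)}(k_n)$ and $\norm{P_\W^\perp(q_2-q_1)}_{L^2}$ by $\norm{P_\W^\perp q_j}_{L^2}\le\epsilon$ via Bessel's inequality, and summing the per-mode estimate in $\ell^2$ over $n\le N$, I obtain
\[
\norm{q_2-q_1}_{L^2}\le C_\W\,e^{CN}\norm{(\Lambda_{q_2}f^n_{q_1}-\Lambda_{q_1}f^n_{q_1})_{n=1}^N}_{H^{-1/2}(\partial\Omega)^N}+C_\W\Big(\textstyle\sum_n\delta_n^2\Big)^{1/2}\norm{q_2-q_1}_{L^2}+O(\epsilon).
\]
The main obstacle is this last, absorption step: one must take $c$ large enough that $C_\W(\sum_n\delta_n^2)^{1/2}\le\tfrac12$, so the remainder moves to the left and is absorbed, producing the accumulated constant $e^{CN}$ (after renaming) and the $8\epsilon$ from the $P_\W^\perp$ contributions. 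The delicate point is tracking these constants while reconciling the competing demands of large $t_n$ (small CGO remainder) and moderate $t_n$ (controlled growth $e^{Cn}$ of the CGO traces); the polynomial law $t_n\sim|k_n|^d\sim n$ is precisely what makes both succeed at once.
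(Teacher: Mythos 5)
This theorem is not proved in the present paper: it is imported verbatim from \cite{alberti2018}, so the only ``proof'' here is the citation. Your blind reconstruction follows essentially the same route as that cited proof, and all of its ingredients are visible in the machinery the present paper recalls in Section~\ref{sec:recon}: the conjugate phase $\tilde\zeta_n=-i(\pi k_n-t_n\xi)-\sqrt{t_n^2+\pi^2|k_n|^2}\,\eta$ with $\zeta_n+\tilde\zeta_n=-2\pi i k_n$, Alessandrini's identity producing $\widehat{(q_2-q_1)}(k_n)$ plus a remainder (the operator $B$, which is a $\frac12$-contraction), and the absorption of the summable remainders. The one adjustment needed to land exactly on the stated constants is in your choice of $N$: rather than taking the first $N$ with trivial nested kernel (which yields a $\W$-dependent equivalence constant $C_\W$ that is not obviously dominated by $e^{CN}$ with $C=C(\Omega,R)$), choose $N$ so that $\norm{P_N^\perp F P_\W}_{L^2(\T^d)\to\ell^2}\le\frac14$ --- possible since $FP_\W$ has finite rank --- which makes the constant in $\norm{w}_{L^2}\le C_\W\bigl(\sum_{n\le N}|\hat w(k_n)|^2\bigr)^{1/2}$ universal and produces the clean $e^{CN}$ and $8\epsilon$ of the statement.
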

\begin{remark*}
It is worth observing that the stability constant $e^{C N}$ may be lowered to $e^{C N^{\frac{1}{2}+\alpha}}$ for any fixed parameter $\alpha>0$. However, in this paper we will not track the dependence of the stability constants on $N$ precisely, and so we opted for this simplified version.
\end{remark*}

The main drawback of this result is that, even if only finitely many boundary measurements are used (and the number of measurements $N$ is given a priori), these still depend on the unknown potential $q_1$. The main result of this work states that \textit{local} uniqueness and stability hold with boundary values given a priori.

\begin{theorem}\label{thm:stab}
Take $d\in\{3,4\}$, $\epsilon>0$ and let $\Omega \subseteq \mathbb{T}^d$ be a  bounded Lipschitz domain with connected complement, $\W \subseteq L^\infty(\Omega)$ be a finite-dimensional subspace and $N$ be as in Theorem~\ref{thm:stab0}.
Take $R>0$ and $q_0 \in L^\infty(\Omega)$ satisfying  $\|q_0\|_{L^{\infty}(\Omega)}\leq R$ and \eqref{hyp:dir}.

There exist $\delta,C > 0$ and $L \in \N$ depending only on $\Omega$, $C_\rho$, $R$, $\W$ and $\|(-\Delta+q_0)^{-1}\|_{H^{-1}(\Omega)\to H^1_0(\Omega)}$ such that for every $q_1,q_2 \in L^{\infty}(\Omega)$ satisfying   \eqref{eq:q1q2}, if
\begin{equation}\label{eq:delta}
\|q_0-q_j\|_{L^2(\Omega)} \leq \delta \qquad j=1,2,
\end{equation}
then $q_1$ and $q_2$ satisfy \eqref{hyp:dir} and
\begin{equation*} 
\|q_2 -q_1\|_{L^2(\Omega)}\leq C\left\|\left(f^L_{n,1}-f^L_{n,2} \right)_{n=1}^N \right\|_{H^{1/2}(\partial \Omega)^N}+ 16 \varepsilon, 
\end{equation*}
where
\begin{equation}\label{eq:nlbd}
f^L_{n,j} = \sum_{l=1}^L \bigl( (S^{q_0}_{\zeta_n} (\Lambda_{q_j}-\Lambda_{q_0})\bigr)^l (f_{q_0}^{n}),\qquad j=1,2,
\end{equation}
and $S^{q_0}_{\zeta_n}$ is the generalized single layer operator corresponding to the Faddeev-Green function and to the potential $q_0$ (see \eqref{eq:S}).
\end{theorem}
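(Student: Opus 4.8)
The plan is to transfer the stability estimate of Theorem~\ref{thm:stab0}, whose right-hand side involves the $q_1$-dependent traces $f^n_{q_1}$, onto the a priori computable data $f^L_{n,j}$ of \eqref{eq:nlbd}. The bridge, which I expect to be the bulk of the layer-operator analysis of Section~\ref{sec:layer}, is a Lippmann--Schwinger-type boundary integral equation for the CGO traces,
\begin{equation*}
f^n_{q} = f^n_{q_0} + S^{q_0}_{\zeta_n}\bigl(\Lambda_{q}-\Lambda_{q_0}\bigr) f^n_{q},
\end{equation*}
valid for $q$ close to $q_0$. It should follow from the Green representation of $\psi^n_q$ against the Faddeev--Green function of $-\Delta+q_0$, rewriting the volume term $(q-q_0)\psi^n_q$ as a boundary contribution through $\Lambda_q-\Lambda_{q_0}$ and using the jump relations of the single and double layer operators; the connected-complement hypothesis is what guarantees that the relevant exterior problem, hence $S^{q_0}_{\zeta_n}$, is well posed. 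Setting $T_j=S^{q_0}_{\zeta_n}(\Lambda_{q_j}-\Lambda_{q_0})$, the identity reads $f^n_{q_j}=(I-T_j)^{-1}f^n_{q_0}=\sum_{l\ge 0}T_j^l f^n_{q_0}$, and $f^L_{n,j}=\sum_{l=1}^{L}T_j^l f^n_{q_0}$ is precisely the tail (from $l=1$) of this Neumann series.

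First I would collect the quantitative bounds. Finiteness of $\|(-\Delta+q_0)^{-1}\|_{H^{-1}\to H^1_0}$ together with \eqref{eq:delta} yields \eqref{hyp:dir} for $q_1,q_2$: writing $-\Delta+q_j=(-\Delta+q_0)\bigl(I+(-\Delta+q_0)^{-1}(q_j-q_0)\bigr)$ and estimating multiplication by $q_j-q_0$ from $H^1_0$ to $H^{-1}$ by $\|q_j-q_0\|_{L^{d/2}}$, a Neumann series inverts the bracket once $\delta$ is small. It is here that the restriction $d\in\{3,4\}$ enters, through the embedding $L^2(\Omega)\hookrightarrow L^{d/2}(\Omega)$ (valid exactly for $d\le 4$), which lets $\|q_j-q_0\|_{L^2}\le\delta$ control $\|q_j-q_0\|_{L^{d/2}}$. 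The same embedding and Alessandrini's identity give $\|\Lambda_{q_j}-\Lambda_{q_0}\|_{H^{1/2}\to H^{-1/2}}\le c\,\|q_j-q_0\|_{L^2}$, so that, choosing $\delta$ small relative to the finitely many norms $\|S^{q_0}_{\zeta_n}\|$ ($n\le N$), one has $\|T_j\|\le\tfrac12$ and geometric convergence. The second, deeper input I would invoke from Section~\ref{sec:layer} is the coercivity of the single layer operator, $\|h\|_{H^{-1/2}}\le C_S\,\|S^{q_0}_{\zeta_n}h\|_{H^{1/2}}$, uniformly for $n\le N$.

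The algebraic core is then the identity, obtained by subtracting the two boundary integral equations and rearranging,
\begin{equation*}
S^{q_0}_{\zeta_n}\bigl(\Lambda_{q_1}-\Lambda_{q_2}\bigr) f^n_{q_1} = (I-T_2)\bigl(f^n_{q_1}-f^n_{q_2}\bigr).
\end{equation*}
Coercivity of $S^{q_0}_{\zeta_n}$ and $\|I-T_2\|\le\tfrac32$ convert this into a bound on the data appearing in Theorem~\ref{thm:stab0}:
\begin{equation*}
\bigl\|(\Lambda_{q_2}-\Lambda_{q_1})f^n_{q_1}\bigr\|_{H^{-1/2}}\le \tfrac32 C_S\,\bigl\|f^n_{q_1}-f^n_{q_2}\bigr\|_{H^{1/2}}.
\end{equation*}
It remains to pass from the exact traces to their truncations. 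Since the $l=0$ terms $f^n_{q_0}$ cancel in the difference, the truncation error equals $\sum_{l>L}(T_1^l-T_2^l)f^n_{q_0}$; expanding $T_1^l-T_2^l=\sum_{m=0}^{l-1}T_1^{m}(T_1-T_2)T_2^{l-1-m}$ and using $\|T_1-T_2\|=\|S^{q_0}_{\zeta_n}(\Lambda_{q_1}-\Lambda_{q_2})\|\le c'\|q_1-q_2\|_{L^2}$ bounds it by $C_1(L+1)2^{-L}\|f^n_{q_0}\|_{H^{1/2}}\|q_1-q_2\|_{L^2}$. The decisive point is that this error is \emph{proportional} to $\|q_1-q_2\|_{L^2}$, i.e.\ relative rather than absolute, because $f^L_{n,1}-f^L_{n,2}$ vanishes when $q_1=q_2$.

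Assembling everything, and using that by \eqref{cond:rho} one has $|\zeta_n|\lesssim t_n\lesssim|k_n|^d\lesssim N$, so that $\|f^n_{q_0}\|_{H^{1/2}}\le e^{c_2 N}$, I obtain
\begin{equation*}
\|q_1-q_2\|_{L^2}\le \tfrac32 C_S\, e^{c_\ast N}\,\bigl\|(f^L_{n,1}-f^L_{n,2})_{n=1}^N\bigr\|_{H^{1/2}} + A(L,N)\,\|q_1-q_2\|_{L^2}+8\epsilon,
\end{equation*}
where $e^{c_\ast N}$ is the constant of Theorem~\ref{thm:stab0} and $A(L,N)\lesssim (L+1)2^{-L}e^{(c_\ast+c_2)N}$. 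Choosing $L$ large enough, depending only on $N$ (hence on $\W$) and on the constants above (hence on $\Omega,C_\rho,R,\W$ and $\|(-\Delta+q_0)^{-1}\|$) so that $A(L,N)\le\tfrac12$, lets me absorb the relative error into the left-hand side; this doubles the constant and turns $8\epsilon$ into $16\epsilon$, giving the claim with $C=3C_S e^{c_\ast N}$. I expect the genuine difficulty to lie not in this bookkeeping but in the two structural facts imported from Section~\ref{sec:layer} --- the boundary integral equation for the Faddeev--Green function of $-\Delta+q_0$, and the coercivity of the generalized single layer operator $S^{q_0}_{\zeta_n}$ --- which are also where the connected-complement hypothesis and the dimensional restriction $d\in\{3,4\}$ are genuinely needed.
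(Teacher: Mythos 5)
Your proposal is correct and follows essentially the same skeleton as the paper's proof: the resolvent perturbation argument for \eqref{hyp:dir}, Novikov's boundary integral equation $f^n_{q_j}=f^n_{q_0}+S^{q_0}_{\zeta_n}(\Lambda_{q_j}-\Lambda_{q_0})f^n_{q_j}$ solved by Neumann series with $\|T_j\|\le\frac12$, a truncation error of size $(L+1)2^{-L}\|q_1-q_2\|_{L^2}$ that is absorbed at the end, and a reduction to Theorem~\ref{thm:stab0} that doubles the constant and turns $8\epsilon$ into $16\epsilon$. The one place where you genuinely deviate is the passage from $f^n_{q_1}-f^n_{q_2}$ to $(\Lambda_{q_1}-\Lambda_{q_2})f^n_{q_1}$: the paper invokes a second boundary integral equation from Novikov, $f^n_{q_1}-f^n_{q_2}=-S^{q_2}_{\zeta_n}(\Lambda_{q_2}-\Lambda_{q_1})f^n_{q_1}$, and then the bounded invertibility of $S^{q_2}_{\zeta_n}$ (Proposition~\ref{prop:Sqinv} applied to the unknown potential $q_2$, which is why the resolvent bound \eqref{eq:resolventqj} is needed there); you instead derive the equivalent identity $S^{q_0}_{\zeta_n}(\Lambda_{q_1}-\Lambda_{q_2})f^n_{q_1}=(I-T_2)(f^n_{q_1}-f^n_{q_2})$ purely algebraically from the two $q_0$-based integral equations and only need the invertibility of $S^{q_0}_{\zeta_n}$ at the known reference potential. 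Your route is marginally cleaner at that step since the inverse's norm then depends only on $\|(-\Delta+q_0)^{-1}\|$, which is given a priori. One small misattribution: the restriction $d\in\{3,4\}$ is not needed for the layer-potential results of Section~\ref{sec:layer} (which hold for $d\ge2$); it enters only through the Sobolev embedding used in Lemmas~\ref{lem:lambdif} and~\ref{lem:resolvent}, exactly as in your own resolvent step.
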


We put together several comments on this result.
\begin{itemize}
\item If $\varepsilon = 0$, namely, if the potentials $q_j$ belong exactly to $\W$, Theorem~\ref{thm:stab} yields uniqueness, since if $f^L_{n,1} = f^L_{n,2}$ for $n=1,\dots,N$ we immediately get $q_1 \equiv q_2$.
\item The number $N$ of the  boundary input voltages $\{f^n_{q_0}\}$ is the same in both Theorems~\ref{thm:stab0} and \ref{thm:stab} and it behaves polynomially in the dimension of $\W$ in some explicit examples (see \cite{alberti2018}). This is a much stronger result than what we obtained in \cite{alberti2019infinite}, where the number of measurements needed for Lipschitz stability in EIT was of the order of $\exp(\mathrm{dim}(\W))$. 
\item While in Theorem~\ref{thm:stab0} the stability constant depends on $N$ explicitly  (and so on $\W$), the constant $C$ of Theorem~\ref{thm:stab} is not explicit. This is due to the fact that the constants appearing in Lemma~\ref{lem:Sqbound} and Proposition~\ref{prop:Sqinv} were not made explicit in order to simplify the proofs. It is reasonable to guess, nonetheless, that the constant depends exponentially on $N$ as in Theorem~\ref{thm:stab0}.
\item The functions $f^L_{n,j}$ do not require boundary data depending on the unknown as in Theorem~\ref{thm:stab0}. Starting from $f^n_{q_0}$, which is known by assumption, one constructs $(S_{\zeta_n}^{q_0}(\Lambda_{q_j}-\Lambda_{q_0}))^l f^n_{q_0}$, $l=1,\ldots,L$, in an iterative way. Here we did not make the dependence of $L$ on $N$  explicit. Nonetheless, if the Lipschitz constant $C$ grows exponentially in $N$, it is easy to prove that $L$ grows linearly (or at worst polynomially) with $N$ and not exponentially, making this nonlinear approach stronger than the linearized one of \cite{alberti2019infinite} for EIT.
\item Another strong point of Theorem~\ref{thm:stab} compared to the stability result of \cite{alberti2019infinite} is the dependence with respect to the mismodeling error $\varepsilon$. Here we have a universal constant multiplying $\varepsilon$ while it easy to show that the techniques of \cite{alberti2019infinite} would require the Lipschitz constant $C$ to multiply $\varepsilon$. This means that with a linearized  approach the mismodeling error $\varepsilon$ would be greatly amplified in the reconstruction, while with the present nonlinear method it is not.
\item As a future research direction, it would be interesting to investigate whether methods based on compressed sensing, and in particular on the approach for inverse problems in PDE developed in \cite{alberti2017infinite}, may be used to reduce the number of measurements by exploiting the sparsity of the unknown. It would also be interesting to consider the global problem, namely, whether it is possible to drop assumption \eqref{eq:delta} (by possibly taking additional measurements).
\end{itemize} 

Theorem~\ref{thm:stab} readily yields a similar result for Calder\'on's problem.

\begin{corollary}\label{cor:main}
Take $d\in\{3,4\}$ and let $\Omega \subseteq \mathbb{T}^d$ be a  bounded Lipschitz domain with connected complement, $\W \subseteq L^\infty(\Omega)$ be a finite-dimensional subspace and $N$ be as in Theorem~\ref{thm:stab0}.
Take $R,\lambda>0$ and $\sigma_0 \in W^{2,\infty}(\Omega)$ satisfying \eqref{eq:lambda}, $\|\sigma_0\|_{W^{2,\infty}(\Omega)}\leq R$ and $\sigma_0 = 1$ in a neighborhood of $\partial \Omega$.

There exist $\delta,C > 0$ and $L \in \N$ depending only on $\Omega$, $R$, $C_\rho$, $\lambda$ and $\W$ such that for every $\sigma_1,\sigma_2 \in W^{2,\infty}(\Omega)$ satisfying  

\begin{equation*}
\norm{\sigma_j}_{W^{2,\infty}(\Omega)}\le R\quad\text{and}\quad \norm{P_\W^\perp  \tfrac{\Delta\sqrt{\sigma_j}}{\sqrt{\sigma_j}}}_{L^2(\T^d)}\le \epsilon,\qquad j=1,2,
\end{equation*}
and $\sigma_1 =\sigma_2= 1$ in a neighborhood of $\partial \Omega$,   if
\begin{equation}\label{eq:sigmajsigma0}
\left\|\sigma_j-\sigma_0 \right\|_{H^2(\Omega)} \leq \delta \qquad \text{for } j=1,2,
\end{equation}
then
\begin{equation*} 
\|\sigma_2 -\sigma_1\|_{L^2(\Omega)}\leq C\left\|\left(f^L_{n,1}-f^L_{n,2} \right)_{n=1}^N \right\|_{H^{1/2}(\partial \Omega)^N}+ c(\Omega,\lambda) \varepsilon, 
\end{equation*}
where
\begin{equation}\notag
f^L_{n,j} = \sum_{l=1}^L \bigl( (S^{q_0}_{\zeta_n} (\Lambda_{\sigma_j}-\Lambda_{\sigma_0})\bigr)^l (f_{q_0}^{n}),\qquad q_0 = \frac{\Delta\sqrt{\sigma_0}}{\sqrt{\sigma_0}}.
\end{equation}
\end{corollary}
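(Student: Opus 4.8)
The plan is to reduce Corollary~\ref{cor:main} to Theorem~\ref{thm:stab} through the classical substitution relating the conductivity and Schr\"odinger equations. For $j=0,1,2$ set $q_j = \frac{\Delta\sqrt{\sigma_j}}{\sqrt{\sigma_j}}$, and recall that $v$ solves $(-\Delta+q_j)v=0$ precisely when $u=v/\sqrt{\sigma_j}$ solves $-\nabla\cdot(\sigma_j\nabla u)=0$. Since $\sigma_j=1$ in a neighborhood of $\partial\Omega$, the boundary values of $\sqrt{\sigma_j}$ are $1$ and its normal derivative vanishes there, so the two DN maps agree, $\Lambda_{\sigma_j}=\Lambda_{q_j}$. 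In particular the data $f^L_{n,j}$ built from $\Lambda_{\sigma_j}$ in the corollary coincide with those built from $\Lambda_{q_j}$ in the theorem, and $q_0$ is the same in both formulas.

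Before applying the theorem I would verify its hypotheses for $q_0,q_1,q_2$. First, the uniform bound: expanding $q_j$ via the product and chain rules in terms of $\nabla\sigma_j$ and $\Delta\sigma_j$ and using $\lambda^{-1}\le\sigma_j\le\lambda$ together with $\norm{\sigma_j}_{W^{2,\infty}(\Omega)}\le R$ gives $\norm{q_j}_\infty\le R'$ for some $R'=R'(R,\lambda)$, which is the value to feed into Theorem~\ref{thm:stab}. Second, the mismodeling bound $\norm{P_\W^\perp q_j}_{\H}\le\epsilon$ is exactly the standing assumption. Third, I would show that $\sigma\mapsto\Delta\sqrt\sigma/\sqrt\sigma$ is Lipschitz from $\{\sigma\in H^2(\Omega):\lambda^{-1}\le\sigma\le\lambda\}$ into $L^2(\Omega)$ with constant depending only on $R,\lambda$; then \eqref{eq:sigmajsigma0}, with the corollary's $\delta$ chosen small enough, forces $\norm{q_0-q_j}_{L^2(\Omega)}$ below the threshold of Theorem~\ref{thm:stab}. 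Finally, since $\sigma_0>0$ makes the conductivity form coercive, $0$ is not a Dirichlet eigenvalue of $-\Delta+q_0$ and $\|(-\Delta+q_0)^{-1}\|_{H^{-1}(\Omega)\to H^1_0(\Omega)}$ is controlled by $\lambda$ and $\Omega$; this is why that quantity does not appear explicitly among the corollary's dependencies. With these checks, Theorem~\ref{thm:stab} yields
\[
\|q_2-q_1\|_{L^2(\Omega)}\le C\bigl\|(f^L_{n,1}-f^L_{n,2})_{n=1}^N\bigr\|_{H^{1/2}(\partial\Omega)^N}+16\epsilon.
\]

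It remains to transfer the $L^2$ estimate from the potentials to the conductivities. Setting $w_j=\sqrt{\sigma_j}$, each $w_j$ solves $(-\Delta+q_j)w_j=0$ with $w_j=1$ on $\partial\Omega$, so the difference satisfies $(-\Delta+q_1)(w_2-w_1)=(q_1-q_2)w_2$ with zero boundary data. Applying $(-\Delta+q_1)^{-1}$ and using $\|w_2\|_{L^\infty}\le\sqrt\lambda$, I would obtain $\|w_2-w_1\|_{L^2(\Omega)}\le c\,\|q_2-q_1\|_{L^2(\Omega)}$ with $c=c(\lambda,\Omega)$, the resolvent being uniformly bounded for $q_1$ in the local neighborhood \eqref{eq:sigmajsigma0}. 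Since $\sigma_2-\sigma_1=(w_2-w_1)(w_2+w_1)$ and $\|w_1+w_2\|_{L^\infty}\le2\sqrt\lambda$, this gives $\|\sigma_2-\sigma_1\|_{L^2(\Omega)}\le c'(\lambda,\Omega)\,\|q_2-q_1\|_{L^2(\Omega)}$; combining with the previous display and renaming constants produces the claimed estimate with $c(\Omega,\lambda)\epsilon$ in place of $16\epsilon$.

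The main obstacle is the two-way control of the nonlinear substitution $\sigma\leftrightarrow q$: in the forward direction one must establish Lipschitz continuity of $\sigma\mapsto\Delta\sqrt\sigma/\sqrt\sigma$ into $L^2$, which hinges on the uniform ellipticity $\lambda^{-1}\le\sigma\le\lambda$ to handle the division by $\sqrt\sigma$; in the backward direction one must bound $(-\Delta+q_1)^{-1}$ uniformly over the local neighborhood, which I would get either from coercivity of the conductivity form or from a perturbation argument around $q_0$. The equality of the DN maps and the algebraic identity for $\sigma_2-\sigma_1$ are then routine.
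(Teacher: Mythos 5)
Your proposal is correct and follows the same route as the paper: the Liouville substitution $q_j=\Delta\sqrt{\sigma_j}/\sqrt{\sigma_j}$, the bounds $\norm{q_j}_\infty\le c(R,\lambda)$ and $\norm{q_0-q_j}_{L^2(\Omega)}\le c(R,\lambda)\,\delta$, and an application of Theorem~\ref{thm:stab}. The paper's proof is a short sketch that defers the remaining details (equality of the DN maps and the conversion of the $L^2$ estimate on the potentials back to the conductivities via $\sigma_2-\sigma_1=(w_2-w_1)(w_2+w_1)$) to \cite[Corollaries~1 and 2]{alberti2018}; your write-up supplies exactly those details, correctly.
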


For the Gel'fand-Calder\'on  problem, the a priori hypothesis on the potential $q$ is written directly in terms of the subspace $\W$, namely, $q$ is assumed to be (almost in) $\W$. Here, however, we assume $\frac{\Delta\sqrt{\sigma}}{\sqrt{\sigma}}$ to be well-approximated by $\W$. This is a shortcoming of the approach, and we do not know whether it is possible to derive a result involving an assumption on $\sigma$ directly, as in \cite{Harrach_2019,alberti2019infinite}. We leave this issue as an interesting  open problem.

\section{Lipschitz stability}\label{sec:uniqueness}

This section contains the proof of Theorem~\ref{thm:stab}. The Lipschitz continuity of the forward map will be a crucial ingredient.
\begin{lemma}\label{lem:lambdif}
Let $\Omega \subseteq \R^d$, $d =3,4$, be an open bounded domain and $q_1,q_2 \in L^{\infty}(\Omega)$ satisfy \eqref{hyp:dir} and $\norm{q_j}_\infty \le R$ for some $R>0$ and
\[
\max\bigl(\|(-\Delta+q_1)^{-1}\|_{H^{-1}(\Omega)\to H^1_0(\Omega)},\|(-\Delta+q_2)^{-1}\|_{H^{-1}(\Omega)\to H^1_0(\Omega)}\bigr)\le U
\]
for some $U>0$. Then
\begin{equation*}
\| \Lambda_{q_1}-\Lambda_{q_2}\|_* \leq c \|q_1-q_2\|_{L^2(\Omega)},
\end{equation*}
where $\| \cdot \|_* = \| \cdot \|_{H^{1/2}(\partial \Omega) \to H^{-1/2}(\partial \Omega)}$ and $c>0$ depends only on   $\Omega$, $R$ and $U$.
\end{lemma}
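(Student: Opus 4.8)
The plan is to prove Lipschitz continuity of the forward map $q \mapsto \Lambda_q$ by comparing solutions of the Schr\"odinger equation with potentials $q_1$ and $q_2$ sharing the same Dirichlet data, and then extracting the estimate on the difference of Neumann traces via a variational (weak-formulation) identity. First I would fix $f \in H^{1/2}(\partial\Omega)$ with $\|f\|_{H^{1/2}(\partial\Omega)} \le 1$ and let $u_1, u_2 \in H^1(\Omega)$ solve $(-\Delta + q_j)u_j = 0$ in $\Omega$ with $u_j|_{\partial\Omega} = f$. These solutions exist and are unique because of hypothesis~\eqref{hyp:dir}, and the bound on $\|(-\Delta+q_j)^{-1}\|_{H^{-1}(\Omega)\to H^1_0(\Omega)} \le U$ gives quantitative control: writing $u_j = w + v_j$ where $w$ is a fixed $H^1$-extension of $f$ with $\|w\|_{H^1(\Omega)} \lesssim \|f\|_{H^{1/2}(\partial\Omega)}$ and $v_j \in H^1_0(\Omega)$ solves $(-\Delta+q_j)v_j = -(-\Delta+q_j)w$, one obtains $\|u_j\|_{H^1(\Omega)} \le C(\Omega,R,U)$.

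The key identity is the standard Alessandrini-type formula: for any $g \in H^{1/2}(\partial\Omega)$ with extension $\tilde g \in H^1(\Omega)$,
\begin{equation*}
\langle (\Lambda_{q_1} - \Lambda_{q_2}) f, g \rangle = \int_\Omega (q_2 - q_1)\, u_1 \,\tilde u_2 \, dx,
\end{equation*}
where $\tilde u_2$ solves $(-\Delta+q_2)\tilde u_2 = 0$ with $\tilde u_2|_{\partial\Omega} = g$. This comes from subtracting the weak formulations of the two boundary value problems and integrating by parts; the boundary terms reassemble into the DN-map pairing and the interior terms collapse to the $(q_2-q_1)$ integral because $u_1$ and $\tilde u_2$ solve equations with potentials $q_1$ and $q_2$ respectively. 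To get the operator norm I then bound the right-hand side:
\begin{equation*}
\left| \int_\Omega (q_2-q_1)\, u_1\, \tilde u_2 \, dx \right| \le \|q_1 - q_2\|_{L^2(\Omega)} \, \|u_1\|_{L^4(\Omega)} \, \|\tilde u_2\|_{L^4(\Omega)}.
\end{equation*}

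The crucial point — and the main obstacle — is controlling the $L^4$ norms of the solutions, which is exactly where the dimensional restriction $d \in \{3,4\}$ enters. By Sobolev embedding, $H^1(\Omega) \hookrightarrow L^4(\Omega)$ holds precisely when $d \le 4$ (since $2^* = 2d/(d-2) \ge 4 \iff d \le 4$), so $\|u_1\|_{L^4(\Omega)} \lesssim \|u_1\|_{H^1(\Omega)} \le C(\Omega,R,U)$ and likewise for $\tilde u_2$. Combining these,
\begin{equation*}
\left| \langle (\Lambda_{q_1}-\Lambda_{q_2}) f, g \rangle \right| \le c(\Omega,R,U)\, \|q_1-q_2\|_{L^2(\Omega)}\, \|f\|_{H^{1/2}(\partial\Omega)}\, \|g\|_{H^{1/2}(\partial\Omega)},
\end{equation*}
and taking the supremum over $\|f\|, \|g\| \le 1$ gives the claimed bound $\|\Lambda_{q_1}-\Lambda_{q_2}\|_* \le c \|q_1-q_2\|_{L^2(\Omega)}$. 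The only subtle dependence to track is that the constant $U$ controls the $H^1$ norms of the solutions uniformly in $j$, which is what allows the final constant to depend only on $\Omega$, $R$, and $U$; I would verify that the extension and well-posedness estimates indeed factor through $U$ as claimed, rather than through the individual potentials.
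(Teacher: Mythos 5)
Your proposal is correct and follows essentially the same route as the paper's proof: Alessandrini's identity, H\"older's inequality in the form $L^2\times L^4\times L^4$, the Sobolev embedding $H^1(\Omega)\hookrightarrow L^4(\Omega)$ for $d=3,4$, and the energy estimate controlled by $U$. The extra detail you give on the lifting $u_j=w+v_j$ and on why $d\le 4$ is needed only makes explicit what the paper leaves implicit.
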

\begin{proof}
The operator norm is defined as
\[
\| \Lambda_{q_1}-\Lambda_{q_2}\|_* =\sup_{
\substack{
 f_1,f_2 \in H^{\frac 12}(\partial \Omega), \\  \|f_1\|_{H^{\frac 12}}=\|f_2\|_{H^{\frac 12}} =1
}
} |\langle f_1,  (\Lambda_{q_1}-\Lambda_{q_2})f_2\rangle_{H^{\frac 12},H^{-\frac 12}}|.
\]
From Alessandrini's identity \cite{alessandrini1988} we have 
\begin{align*}
|\langle f_1, (\Lambda_{q_1}-\Lambda_{q_2})f_2\rangle_{H^{\frac 12},H^{-\frac 12}}|&= \left|\int_{\Omega}(q_1-q_2)u_1 u_2\right|\\
&\leq \|q_1-q_2\|_{L^2(\Omega)}\|u_1\|_{L^4(\Omega)}\|u_2\|_{L^4(\Omega)},
\end{align*}
by H\"older's inequality and where $u_j \in H^1(\Omega)$ is the unique solution of $(-\Delta +q_j)u_j=0$ in $\Omega$ and $u_j=f_j$ on $\partial \Omega$. In dimension $d=3,4$, by Sobolev embedding and the standard energy estimate for elliptic equations we have 
\[
\|u_j\|_{L^4(\Omega)} \leq c(\Omega) \|u_j\|_{H^1(\Omega)} \leq c(\Omega,R,U) \|f_j\|_{H^{\frac 12}(\partial \Omega)}.
\]
The proof follows.
\end{proof}

The following lemma shows,  by proving a quantitative estimate, that hypothesis \eqref{hyp:dir} is stable under small $L^2$ perturbations of $q$.
\begin{lemma}\label{lem:resolvent}
Let $\Omega \subseteq \R^d$, $d =3,4$, be an open bounded domain and $q_0 \in L^{\infty}(\Omega)$ satisfy \eqref{hyp:dir}. There exists $\delta>0$ depending only on 
$\Omega$ and $\|(-\Delta+q_0)^{-1}\|_{H^{-1}(\Omega)\to H^1_0(\Omega)}$ such that if $q\in L^{\infty}(\Omega)$ satisfies  
\[
\norm{q-q_0}_{L^2(\Omega)}\le \delta,
\]
then $q$ satisfies  \eqref{hyp:dir} and
\[
\|(-\Delta+q)^{-1}\|_{H^{-1}(\Omega)\to H^1_0(\Omega)}\le 2\|(-\Delta+q_0)^{-1}\|_{H^{-1}(\Omega)\to H^1_0(\Omega)}.
\]
\end{lemma}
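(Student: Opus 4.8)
The plan is to treat $-\Delta+q$ as the bounded linear operator $A_q\colon H^1_0(\Omega)\to H^{-1}(\Omega)$ defined through its weak (variational) form, and to recall that assumption \eqref{hyp:dir} for a given potential is \emph{equivalent} to the invertibility of $A_q$, in which case $\|(-\Delta+q)^{-1}\|_{H^{-1}(\Omega)\to H^1_0(\Omega)}$ is exactly the operator norm of $A_q^{-1}$. Writing $M:=\|A_{q_0}^{-1}\|_{H^{-1}(\Omega)\to H^1_0(\Omega)}$ (finite by \eqref{hyp:dir} and Fredholm theory, since $q_0\in L^\infty(\Omega)$), I would set up a resolvent perturbation argument: decompose $A_q=A_{q_0}+M_v$, where $M_v$ denotes multiplication by $v:=q-q_0$, and control $M_v$ as a small perturbation when $\|v\|_{L^2(\Omega)}\le\delta$.

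The key step is to show that multiplication by $v\in L^2(\Omega)$ is bounded from $H^1_0(\Omega)$ to $H^{-1}(\Omega)$ with norm at most $c(\Omega)\|v\|_{L^2(\Omega)}$. I would prove this by duality, exactly as in Lemma~\ref{lem:lambdif}: for $u,\varphi\in H^1_0(\Omega)$, H\"older's inequality gives $|\int_\Omega v\,u\,\varphi|\le\|v\|_{L^2(\Omega)}\|u\|_{L^4(\Omega)}\|\varphi\|_{L^4(\Omega)}$, and the Sobolev embedding $H^1(\Omega)\hookrightarrow L^4(\Omega)$---valid precisely because $d\in\{3,4\}$ (it fails for $d\ge 5$, which is where the dimension restriction enters)---together with the Poincar\'e inequality yields $\|u\|_{L^4(\Omega)}\le c(\Omega)\|u\|_{H^1_0(\Omega)}$ and similarly for $\varphi$. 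Hence $\|M_v\|_{H^1_0(\Omega)\to H^{-1}(\Omega)}\le c(\Omega)\|v\|_{L^2(\Omega)}$, with $c(\Omega)$ a purely geometric constant.

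With this in hand I would factor $A_q=A_{q_0}\bigl(I+A_{q_0}^{-1}M_v\bigr)$, where $A_{q_0}^{-1}M_v$ is a bounded operator on $H^1_0(\Omega)$ of norm at most $M\,c(\Omega)\|v\|_{L^2(\Omega)}\le M\,c(\Omega)\,\delta$. Choosing $\delta:=1/\bigl(2\,c(\Omega)\,M\bigr)$---which depends only on $\Omega$ and $M=\|(-\Delta+q_0)^{-1}\|_{H^{-1}(\Omega)\to H^1_0(\Omega)}$, as required---makes this norm at most $1/2$, so the Neumann series converges and $I+A_{q_0}^{-1}M_v$ is invertible on $H^1_0(\Omega)$ with inverse norm at most $2$. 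Consequently $A_q$ is invertible as a composition of isomorphisms, which is precisely the statement that $q$ satisfies \eqref{hyp:dir}, and the identity $A_q^{-1}=\bigl(I+A_{q_0}^{-1}M_v\bigr)^{-1}A_{q_0}^{-1}$ gives $\|(-\Delta+q)^{-1}\|_{H^{-1}(\Omega)\to H^1_0(\Omega)}\le 2M$, as claimed. The argument is entirely soft; the only genuinely quantitative input is the multiplication estimate of the second paragraph, and the only point requiring care is the bookkeeping ensuring that $\delta$ depends solely on the two permitted quantities.
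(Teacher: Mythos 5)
Your argument is correct and coincides with the paper's own proof: the same decomposition $-\Delta+q=(-\Delta+q_0)+M_{q-q_0}$, the same multiplication estimate $\|M_{q-q_0}\|_{H^1_0\to H^{-1}}\le c(\Omega)\|q-q_0\|_{L^2(\Omega)}$ via H\"older and the Sobolev embedding $H^1_0(\Omega)\hookrightarrow L^4(\Omega)$ for $d\in\{3,4\}$, and the same Neumann-series inversion with $\delta=(2c(\Omega)\|(-\Delta+q_0)^{-1}\|)^{-1}$. No gaps.
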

\begin{proof}
Take $q\in L^{\infty}(\Omega)$ satisfying $\norm{q-q_0}_{L^2(\Omega)}\le \delta$ for some $\delta>0$ to be determined later. 
Let $T=-\Delta+q_0\colon H^1_0(\Omega)\to H^{-1}(\Omega)$ and $M_{q-q_0}\colon H^1_0(\Omega)\to H^{-1}(\Omega)$ be the operator of multiplication by $q-q_0$. For $F\in H^{-1}(\Omega)$ let us consider the Dirichlet problem
\[
(-\Delta + q)u=F
\]
for $u\in H^1_0(\Omega)$. This may be rewritten as
\[
(I_{H^1_0(\Omega)}+T^{-1}M_{q-q_0}) u = T^{-1} F.
\]
In order to conclude, it is enough to show that $\|T^{-1}M_{q-q_0}\|_{H^1_0\to H^1_0}\le\frac12$ for $\delta$ small enough. Observe that
\[
\|T^{-1}M_{q-q_0}\|_{H^1_0\to H^1_0} \le \|T^{-1}\|_{H^{-1}\to H^1_0} \|M_{q-q_0}\|_{H^1_0\to H^{-1}}.
\]
It remains to estimate the last factor. For $v\in H^1_0(\Omega)$, by the Sobolev embedding theorem we have
\[
\begin{split}
\|M_{q-q_0}v\|_{H^{-1}(\Omega)} &=\sup_{w\in H^1_0, \|w\|_{H^1_0}=1}|\int_\Omega (q-q_0)vw\,dx|\\
&\le \sup_{\|w\|_{H^1_0}=1} \|q-q_0\|_{L^2(\Omega)} \|v\|_{L^4(\Omega)} \|w\|_{L^4(\Omega)} \\
&\le c(\Omega) \|q-q_0\|_{L^2(\Omega)}  \sup_{\|w\|_{H^1_0}=1} \|v\|_{H^1_0(\Omega)} \|w\|_{H^1_0(\Omega)}\\
&\le c(\Omega) \delta  \|v\|_{H^1_0(\Omega)}.
\end{split}
\]
This gives $\|M_{q-q_0}\|_{H^1_0\to H^{-1}}\le c(\Omega)\delta$, and so
\[
\|T^{-1}M_{q-q_0}\|_{H^1_0\to H^1_0}\le c(\Omega)  \|T^{-1}\|_{H^{-1}\to H^1_0} \delta \le\frac12,
\]
provided that $\delta=(2c(\Omega)\|T^{-1}\|_{H^{-1}\to H^1_0})^{-1}$.
\end{proof}

We need to introduce the following functions and operators for $\zeta\in \C^d$:
\begin{align}
g_\zeta(x) &= -\left( \frac{1}{2\pi}\right)^d \int_{\R^d}\frac{e^{i\xi \cdot x} }{\xi \cdot \xi + 2 \zeta \cdot \xi}\,d\xi,\notag\\
G_\zeta(x) &= e^{i \zeta \cdot x} g_\zeta(x),\notag\\
G^{q_0}_\zeta(x,y) &= G_\zeta(x-y) + \int_{\R^d}G_\zeta(x-z)q_0(z)G^{q_0}_\zeta(z,y)dz,\notag\\
S^{q_0}_\zeta f(x) &= \int_{\partial \Omega}G^{q_0}_\zeta(x,y) f(y)d\sigma(y).
\label{eq:S}
\end{align}
The function $G_\zeta$ is the Faddeev-Green function, the function $G^{q_0}_\zeta$ was introduced in \cite{novikov2004} (it is called $R^0$ there) and $S^{q_0}_\zeta$ is a generalized single layer operator corresponding to the potential $q_0$. In Lemma~\ref{lem:Sqbound} we prove that $S^{q_0}_\zeta \colon H^{-1/2}(\partial \Omega) \to H^{1/2}(\partial \Omega)$ is bounded for $|\zeta|\ge c_1(R)$.

We are now ready to prove Theorem~\ref{thm:stab}.

\begin{proof}[Proof of Theorem~\ref{thm:stab}]
With an abuse of notation, several different positive constants depending only on $\Omega$, $R$,  $C_\rho$, $\W$ and $\|(-\Delta+q_0)^{-1}\|_{H^{-1}(\Omega)\to H^1_0(\Omega)}$ will be denoted by the same letter $c$ (the dependence on $N$ is omitted since, by Theorem~\ref{thm:stab0}, $N$ depends only on $\Omega$ and $\W$). Take $L\in\N$ and $\delta>0$ (to be determined later), and let $q_1,q_2\in L^\infty(\Omega)$ satisfy \eqref{eq:q1q2} and \eqref{eq:delta}. 

First, note that by Lemma~\ref{lem:resolvent} it is possible to choose $\delta$ small enough (depending only on $\Omega$ and $\|(-\Delta+q_0)^{-1}\|_{H^{-1}(\Omega)\to H^1_0(\Omega)}$ ) so that $q_1$ and $q_2$ satisfy \eqref{hyp:dir} and 
\begin{equation}\label{eq:resolventqj}
\|(-\Delta+q_j)^{-1}\|_{H^{-1}(\Omega)\to H^1_0(\Omega)}\le 2 \|(-\Delta+q_0)^{-1}\|_{H^{-1}(\Omega)\to H^1_0(\Omega)},\qquad j=1,2.
\end{equation}

For $n=1,\dots,N$   set
\[
r_n^L = \left\|f^L_{n,1}-f^L_{n,2} \right\|_{H^{1/2}(\partial \Omega)}.
\] 
Observe that, in view of \eqref{eq:zeta}, \eqref{cond:rho} and \eqref{eq:notation} we have
\begin{equation}\label{eq:D}
|\zeta_n|\le D N,\qquad n=1,\dots,N,
\end{equation}
for some $D>0$ depending only on $R$ and $C_\rho$.

We claim that the following inequality holds:
\begin{equation}\label{est:psidif}
\| f^n_{q_2} - f^n_{q_1}\|_{H^{1/2}(\partial \Omega)}\leq c \frac{L+1}{2^L}\|q_2-q_1\|_{L^2(\Omega)} + 2 r_n^L,\quad n=1,\dots,N.
\end{equation}
Take $n\in\{1,\dots,N\}$. From \cite[Theorem 1]{novikov2004} we have that $f^n_{q_j}$ satisfies the following boundary integral equation:
\begin{equation}\label{eq:bie}
f^n_{q_j} = f^n_{q_0} + S^{q_0}_{\zeta_n} (\Lambda_{q_j}-\Lambda_{q_0})f^n_{q_j},\qquad j=1,2.
\end{equation}
By Lemma \ref{lem:Sqbound} and \eqref{eq:D} we have that the operator $S^{q_0}_{\zeta_n} \colon H^{-1/2}(\partial \Omega) \to H^{1/2}(\partial \Omega)$ is bounded with
\begin{equation}\label{eq:normS}
\norm{S^{q_0}_{\zeta_n}}_{H^{-1/2}\to H^{1/2}}\le c.
\end{equation}
Further, Lemma~\ref{lem:lambdif}, \eqref{eq:delta} and \eqref{eq:resolventqj} yield
\begin{equation}\label{est:wp}
\| \Lambda_{q_j}-\Lambda_{q_0}\|_* \leq c(\Omega,R,\|(-\Delta+q_0)^{-1}\|_{H^{-1}(\Omega)\to H^1_0(\Omega)})\, \delta,
\end{equation}
where $\| \cdot \|_* = \|\cdot \|_{H^{1/2}\to H^{-1/2}}$. Choose $\delta > 0$ small enough (depending only on $\Omega$, $R$, $C_\rho$, $\W$ and $\|(-\Delta+q_0)^{-1}\|_{H^{-1}(\Omega)\to H^1_0(\Omega)}$) so that \eqref{eq:resolventqj} holds and
\begin{equation}\label{eq:12}
\|S^{q_0}_{\zeta_n} (\Lambda_{q_j}-\Lambda_{q_0})\|_{H^{1/2}\to H^{1/2}} \leq \frac{1}{2}.
\end{equation}
Thus, equation \eqref{eq:bie} can be solved with Neumann series converging in $H^{1/2}(\partial \Omega)$:
\begin{equation*}
f^n_{q_j} = \sum_{l= 0}^{+\infty}(S^{q_0}_{\zeta_n} (\Lambda_{q_j}-\Lambda_{q_0}))^l f^n_{q_0}, \quad j=1,2.
\end{equation*}
Then we obtain:
\begin{align*}
f^n_{q_2}-f^n_{q_1} &= \sum_{l \geq L+1} (S^{q_0}_{\zeta_n} (\Lambda_{q_2}-\Lambda_{q_0}))^l f^n_{q_0} -\sum_{l \geq L+1} (S^{q_0}_{\zeta_n} (\Lambda_{q_1}-\Lambda_{q_0}))^l f^n_{q_0} + f_{n,2}^L-f_{n,1}^L\\
&=  (S^{q_0}_{\zeta_n} (\Lambda_{q_2}-\Lambda_{q_0}))^{L+1} f^n_{q_2} - (S^{q_0}_{\zeta_n} (\Lambda_{q_1}-\Lambda_{q_0}))^{L+1} f^n_{q_1} + f_{n,2}^L-f_{n,1}^L\\
&= ((S^{q_0}_{\zeta_n} (\Lambda_{q_2}-\Lambda_{q_0}))^{L+1} - (S^{q_0}_{\zeta_n} (\Lambda_{q_1}-\Lambda_{q_0}))^{L+1})f^n_{q_2}\\
&\quad + (S^{q_0}_{\zeta_n} (\Lambda_{q_1}-\Lambda_{q_0}))^{L+1}(f^n_{q_2}-f^n_{q_1}) + f_{n,2}^L-f_{n,1}^L.
\end{align*}
As a result, by \eqref{eq:12} we have
\begin{align*}
\|f^n_{q_2}-f^n_{q_1}\|_{H^{1/2}}&\leq \|(S^{q_0}_{\zeta_n} (\Lambda_{q_2}-\Lambda_{q_0}))^{L+1} - (S^{q_0}_{\zeta_n} (\Lambda_{q_1}-\Lambda_{q_0}))^{L+1}\|_{H^{1/2}\to H^{1/2}} \|f^n_{q_2}\|_{H^{1/2}}\\
&\quad + \|(S^{q_0}_{\zeta_n} (\Lambda_{q_1}-\Lambda_{q_0}))^{L+1}\|_{H^{1/2}\to H^{1/2}} \|f^n_{q_2}-f^n_{q_1}\|_{H^{1/2}}+r_n^L\\
&\leq c \|(S^{q_0}_{\zeta_n} (\Lambda_{q_2}-\Lambda_{q_0}))^{L+1} - (S^{q_0}_{\zeta_n} (\Lambda_{q_1}-\Lambda_{q_0}))^{L+1}\|_{H^{1/2}\to H^{1/2}}\\
&\quad + \frac{1}{2^{L+1}} \|f^n_{q_2}-f^n_{q_1}\|_{H^{1/2}}+r_n^L,
\end{align*}
where we used the fact that 
\begin{equation}\label{eq:normf}
\|f^n_{q_2}\|_{H^{1/2}}\leq  c
\end{equation}
 (see \eqref{eq:D} and \cite[Proof of Theorem 2]{alberti2018}). As a consequence, we have
\[
\|f^n_{q_2}-f^n_{q_1}\|_{H^{1/2}}\le c\|(S^{q_0}_{\zeta_n} (\Lambda_{q_2}-\Lambda_{q_0}))^{L+1} - (S^{q_0}_{\zeta_n} (\Lambda_{q_1}-\Lambda_{q_0}))^{L+1}\|_{H^{1/2}\to H^{1/2}}+2r_n^L.
\]

In order to estimate the remaining term we need the following identity for bounded linear operators $A,B$ on $H^{1/2}(\partial \Omega)$:
\begin{equation*}
A^{L+1}-B^{L+1} = \sum_{h=0}^L A^h (A-B)B^{L-h}.
\end{equation*}
Putting $A = S^{q_0}_{\zeta_n} (\Lambda_{q_2}-\Lambda_{q_0})$ and $B= S^{q_0}_{\zeta_n} (\Lambda_{q_1}-\Lambda_{q_0})$  we find
\begin{equation}\label{eq:A-B}
\begin{aligned}
&\|(S^{q_0}_{\zeta_n} (\Lambda_{q_2}-\Lambda_{q_0}))^{L+1} - (S^{q_0}_{\zeta_n} (\Lambda_{q_1}-\Lambda_{q_0}))^{L+1}\|_{H^{1/2}\to H^{1/2}} \\
&\qquad \leq \|S^{q_0}_{\zeta_n} (\Lambda_{q_2}-\Lambda_{q_1})\|\sum_{h=0}^L \|S^{q_0}_{\zeta_n} (\Lambda_{q_2}-\Lambda_{q_0})\|^h \|S^{q_0}_{\zeta_n} (\Lambda_{q_1}-\Lambda_{q_0})\|^{L-h}\\
&\qquad \leq c\frac{L+1}{2^L}\|\Lambda_{q_2}-\Lambda_{q_1}\|_*,
\end{aligned}
\end{equation}
where we used \eqref{eq:normS} and \eqref{eq:12}.
Using Lemma~\ref{lem:lambdif} again we have $\|\Lambda_{q_2}-\Lambda_{q_1}\|_* \leq c  \| q_2 - q_1\|_{L^2(\Omega)}$. So estimate \eqref{est:psidif} is proved.\bigskip

In order to finish the proof we need to connect $(\Lambda_{q_1} - \Lambda_{q_2})(f^n_{q_1})$ to $f^n_{q_2}-f^n_{q_1}$. This is done again via a boundary integral equation from \cite[Theorem 1]{novikov2004}:
\begin{equation}\label{eq:intgralq1q2}
f^n_{q_1}- f^n_{q_2} =  -S^{q_2}_{\zeta_n} (\Lambda_{q_2}-\Lambda_{q_1})f^n_{q_1},\qquad n=1,\dots,N.
\end{equation}
From Proposition \ref{prop:Sqinv}, \eqref{eq:resolventqj} and \eqref{eq:D}, we immediately obtain the following estimate:
\begin{equation*}
\|(\Lambda_{q_1} - \Lambda_{q_2})f^n_{q_1}\|_{H^{-1/2}(\partial \Omega)}\leq c \|f^n_{q_2}-f^n_{q_1}\|_{H^{1/2}(\partial \Omega)} ,\qquad n=1,\dots,N. 
\end{equation*}
Therefore, by \eqref{est:psidif} and Theorem~\ref{thm:stab0} we obtain
\begin{align*}
\|q_2 - q_1\|_{L^2(\Omega)} &\leq   e^{c(\Omega,R) N}
\norm{\left((\Lambda_{q_2} - \Lambda_{q_1})f^n_{q_1}\right)_{n=1}^N}_{H^{-1/2}(\partial \Omega)^N}+8\varepsilon \\
&\leq  c \norm{\left( f^n_{q_2}-f^n_{q_1}\right)_{n=1}^N}_{H^{1/2}(\partial \Omega)^N}+8\varepsilon\\
&\leq c \left(\frac{L+1}{2^L}\|q_2 -q_1\|_{L^2(\Omega)} + \|(r_n^L)_n\|_2 \right)  +8\varepsilon. 
\end{align*}
Taking $L$ sufficiently large, so that $ c\,\frac{L+1}{2^L} \leq \frac 1 2$, we obtain the Lipschitz stability estimate of the statement.
\end{proof}

We conclude this section by showing how Corollary~\ref{cor:main} on the Calder\'on problem is an immediate consequence of Theorem~\ref{thm:stab}.

\begin{proof}[Proof of Corollary~\ref{cor:main}]
Set $q_j = \frac{\Delta\sqrt{\sigma_j}}{\sqrt{\sigma_j}}$ for $j=0,1,2$. Thanks to \eqref{eq:lambda}, by assumption we have
\[
\norm{q_j}_\infty \le c(R,\lambda),\qquad j=0,1,2.
\]
Similarly, 
\eqref{eq:sigmajsigma0} and \eqref{eq:lambda} yield
\[
\left\| q_0-q_j\right\|_{L^2(\Omega)} \leq c(R,\lambda) \delta, \qquad  j=1,2.
\]
Thus, it is possible to apply Theorem~\ref{thm:stab} to $q_0$, $q_1$ and $q_2$, by using the standard Liouville transformation. For the details, the reader is referred to  \cite[Corollaries~1 and 2]{alberti2018}.
\end{proof}

\section{Reconstruction}\label{sec:recon}

We extend the reconstruction scheme of our previous work \cite{alberti2018} to the setting of the present article. We also incorporate noise and mismodeling errors. We consider only Gel'fand-Calder\'on's problem; the reconstruction algorithm for Calder\'on's problem may be obtained by using the usual change of variables $q=\frac{\Delta\sqrt{\sigma}}{\sqrt{\sigma}}$, as in Corollary~\ref{cor:main}. Here we do not assume to know the boundary traces of the CGO solutions of an unknown potential $\bar q$, but only those of an approximation $q_0$.

Consider the setting of Theorem~\ref{thm:stab} and assume $\|q_0 - \bar q\|_{L^2(\Omega)} \leq \delta$. We now present a reconstruction algorithm able to recover $\bar q$ from $q_0$ and a finite number of measurements in the same spirit of Theorem~\ref{thm:stab}. The  noisy boundary measurements correspond to finitely many evaluations of the boundary map
\[
\Lanoise = \Lambda_{\bar q}+E,
\]
where $E\colon H^{\frac12}(\partial\Omega)\to H^{-\frac12}(\partial\Omega)$ is a linear operator representing  noise ($E$ stands for \textit{error}) and satisfies
\[
\norm{E}_*\le\eta,
\]
where $\eta\ge 0$ is the noise level.

From $q_0$ we can stably compute its associated CGO solution $f^n_{q_0}$, for $n=1,\dots,N$, and the quantity
\begin{equation*}
f^L_{n} = \sum_{l=0}^L  (S^{q_0}_{\zeta_n} (\Lanoise-\Lambda_{q_0}))^l f^n_{q_0},
\end{equation*}
which can be obtained iteratively by solving Dirichlet problems for the Schr\"odinger equation \eqref{eq:schr} with boundary values $(S^{q_0}_{\zeta_n} (\Lanoise-\Lambda_{q_0}))^l f^n_{q_0}$ for $l=0,\dots,L-1$.

Let $L^\infty_R(\Omega) = \{ q \in L^\infty(\Omega):\|q\|_\infty \leq R\}$ be equipped with the distance induced by the $L^2$ norm.
We define the nonlinear mapping $A\colon L^\infty_R(\Omega) \to \W_R$ by
\begin{equation}\label{eq:A}
A(q) = P_{\W_R}( F^{-1}P_N T(q) + F^{-1}P_N^\perp Fi(q) - F^{-1}P_N B  (q)),
\end{equation}
where $i$ is the extension operator already defined and, as in \cite{alberti2018}:
\begin{itemize}
\item $\W_R=L^\infty_R(\Omega)\cap\W$;
\item $F\colon \H \to \ell^2$ is the  discrete Fourier transform defined by
\begin{equation}\label{def:DFT}
(F q)_n=\int_{\T^d} q(x)e^{-2\pi ik_n\cdot x}\,dx,\qquad n\in\N;
\end{equation}
\item $B\colon L^2(\Omega) \to \ell^2$ is the  perturbation given by
\begin{equation*}
\begin{split}
(B (q))_n&=\int_{\Omega} q(x)e^{-2\pi ik_l\cdot x}r_q^{k_n,t_n}(x)\,dx\\
 &=\langle  e^{\tilde\zeta_n\cdot x}, (\Lambda_q - \Lambda_{0})f^n_q \rangle_{{ H^{\frac12}(\partial \Omega) \times  H^{-\frac12}(\partial \Omega)}}-(Fq)_n,
 \end{split}
\end{equation*}
where, as in \eqref{eq:zeta}, $\tilde\zeta_n = -i(\pi k_n - t_n\xi) - \sqrt{t_n^2 +\pi^2|k_n|^2}\eta$ (note that the second identity holds only when $q$ satisfies \eqref{hyp:dir});
\item $P_N\colon\ell^\infty\to\ell^\infty$ is the projection onto the first $N$ components, namely $P_N(a_1,a_2,\dots)=(a_1,\dots,a_N,0,0,\dots)$, and $P_N^\perp=I-P_N$;
\item $P_{\W_R}$ is the projection from $L^2(\T^d)$ onto the closed and convex set $i(\W_R)$;
\item and
\begin{equation*}
(T(q))_n = \left\langle  e^{\tilde\zeta_n\cdot x}, (\Lanoise - \Lambda_{0}) \left( f^L_n+(S^{q_0}_{\zeta_n} (\Lanoise-\Lambda_{q_0}))^{L+1}  f^{n}_q\right) \right\rangle_{{ H^{\frac12}(\partial \Omega) \times  H^{-\frac12}(\partial \Omega)}}.
\end{equation*}
\end{itemize}

The main result of this section reads as follows.
\begin{theorem}\label{theo:rec}
Take $d\in\{3,4\}$ and $R,\epsilon>0$ and let $\Omega \subseteq \mathbb{T}^d$ be a  bounded Lipschitz domain with connected complement, $\W \subseteq L^\infty(\Omega)$ be a finite-dimensional subspace and $N$ and $\delta$ be as in Theorem~\ref{thm:stab}.
Take  $q_0 \in L^\infty_R(\Omega)$ satisfying \eqref{hyp:dir} and $\bar q\in L^\infty_R(\Omega)$ satisfying
\begin{equation*}
\|\bar q -P_{\W_R} (\bar q)\|_{L^2(\T^d)}\leq \varepsilon,\qquad \|q_0-\bar q\|_{L^2(\Omega)} \leq \delta.
\end{equation*}

There exist  $L \in \N$ and $C,S>0$ depending only on $\Omega$, $C_\rho$, $R$, $\W$ and $\|(-\Delta+q_0)^{-1}\|_{H^{-1}(\Omega)\to H^1_0(\Omega)}$ such that, if $\eta\in [0,S]$ and $q^1 \in \W_R$ is any initial guess, then the sequence 
\[
q^n = A(q^{n-1}), \qquad  n \geq 2,
\]
converges to $\q \in \W_R$ and
\begin{equation}\label{eq:banach}
\|\q- q^n\|_{L^2(\Omega)} \leq 8 \left(\frac 7 8\right)^n\|q^2 - q^1\|_{L^2(\Omega)},\qquad n\ge 1.
\end{equation}
Further, we have
\begin{equation}\label{eq:q*qbar}
\|\bar q - \q\|_{L^2(\Omega)} \leq 14\epsilon+C\eta.
\end{equation}
\end{theorem}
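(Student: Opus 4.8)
The plan is to realize $A$ as a contraction of the complete metric space $\W_R$ — a closed convex subset of $L^2(\T^d)$ with the $L^2$-distance — and to exhibit $P_{\W_R}(\bar q)$ as an approximate fixed point: then \eqref{eq:banach} is the geometric decay furnished by the Banach fixed-point theorem, and \eqref{eq:q*qbar} a perturbation of the fixed-point relation. Before anything else I would check that $A$ is well defined on all of $L^\infty_R(\Omega)$: the only delicate ingredients are the traces $f^n_q$ and the remainders $r^{k_n,t_n}_q$, but these are the CGO data built for $t_n\ge c_1(R)$, which exist for every $q\in L^\infty_R(\Omega)$ regardless of \eqref{hyp:dir}; hence $B(q)$ (read through its first, integral, expression) and $T(q)$ make sense and $A(q)\in\W_R$. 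Writing $A=P_{\W_R}\circ G$ with $G(q)=F^{-1}P_N(T(q)-B(q))+F^{-1}P_N^\perp Fi(q)$ and using that $P_{\W_R}$ is $1$-Lipschitz reduces everything to the Lipschitz behaviour of $G$.

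For the contraction I would split $G(q)-G(q')$ into the high-frequency block $F^{-1}P_N^\perp F(q-q')$ and the low-frequency block $F^{-1}P_N\bigl(T(q)-T(q')-B(q)+B(q')\bigr)$. When $q,q'\in\W_R$ the difference $q-q'$ lies in $\W$, and since $N$ (fixed in Theorem~\ref{thm:stab0} following \cite{alberti2018}) is large enough to capture most of the $L^2$-energy of the finite-dimensional space $i(\W)$, one has $\|F^{-1}P_N^\perp Fw\|_{L^2}\le\theta\|w\|_{L^2}$ for $w\in i(\W)$ with some $\theta<1$ bounded away from $1$. In the low-frequency block, $T$ depends on $q$ only through the Neumann tail $(S^{q_0}_{\zeta_n}(\Lanoise-\Lambda_{q_0}))^{L+1}f^n_q$ and $B$ only through $r^{k_n,t_n}_q$; by Lemma~\ref{lem:Sqbound}, the smallness \eqref{eq:12} (which survives replacing $\Lambda_{q_j}$ by $\Lanoise$ as long as $\eta\le S$ keeps $\|S^{q_0}_{\zeta_n}(\Lanoise-\Lambda_{q_0})\|_{H^{1/2}\to H^{1/2}}\le\frac12$), the Lipschitz continuity of $q\mapsto f^n_q$ and $q\mapsto r^{k_n,t_n}_q$ from \cite{alberti2018}, and Lemma~\ref{lem:lambdif}, this block is Lipschitz with a constant $\beta$ that can be made arbitrarily small by choosing $L$ large (which damps the $T$-tail by $2^{-(L+1)}$) and invoking the smallness of the remainder-based $B$-contribution from \cite{alberti2018}, with $\delta$ small to bound the forward-map factors. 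Fixing $N$ (hence $\theta$), then $L$ and $\delta,S$ so that $\theta+\beta\le\frac78$, gives $\|A(q)-A(q')\|_{L^2}\le\frac78\|q-q'\|_{L^2}$ on $\W_R$, and the Banach fixed-point theorem delivers the unique $\q\in\W_R$ together with \eqref{eq:banach}.

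For the accuracy estimate the crucial algebraic fact is that, when $\eta=0$, $\bar q$ is a fixed point up to projection. Summing the Neumann series of the boundary integral equation \eqref{eq:bie} of \cite{novikov2004} gives $f^L_n+(S^{q_0}_{\zeta_n}(\Lambda_{\bar q}-\Lambda_{q_0}))^{L+1}f^n_{\bar q}=f^n_{\bar q}$, so that $(T(\bar q))_n=\langle e^{\tilde\zeta_n\cdot x},(\Lambda_{\bar q}-\Lambda_0)f^n_{\bar q}\rangle=(F\bar q)_n+(B(\bar q))_n$, whence $T(\bar q)-B(\bar q)=F\bar q$ and $A(\bar q)=P_{\W_R}(F^{-1}P_NF\bar q+F^{-1}P_N^\perp F\bar q)=P_{\W_R}(\bar q)$; only \eqref{hyp:dir} for $\bar q$ is needed, which Lemma~\ref{lem:resolvent} grants since $\|q_0-\bar q\|_{L^2}\le\delta$. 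Restoring the noise replaces $\Lambda_{\bar q}$ by $\Lanoise=\Lambda_{\bar q}+E$ with $\|E\|_*\le\eta$; propagated through the (still convergent, as $\eta\le S$) Neumann series and the bounded operators $S^{q_0}_{\zeta_n}$ this perturbs the identity by at most $C\eta$, so that $\|A(\bar q)-P_{\W_R}(\bar q)\|_{L^2}\le C\eta$.

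To conclude I would estimate $\|A(\bar q)-A(P_{\W_R}\bar q)\|_{L^2}\le\|F^{-1}P_N^\perp F(\bar q-P_{\W_R}\bar q)\|_{L^2}+\beta\|\bar q-P_{\W_R}\bar q\|_{L^2}\le(1+\beta)\epsilon$, where the mismodeling bound $\|\bar q-P_{\W_R}\bar q\|_{L^2}\le\epsilon$ is used and the high-frequency part is now controlled only by $1$, since $\bar q-P_{\W_R}\bar q\notin\W$. Inserting this together with the bound of the previous step into
\[
\|P_{\W_R}\bar q-\q\|_{L^2}\le\|P_{\W_R}\bar q-A(\bar q)\|_{L^2}+\|A(\bar q)-A(P_{\W_R}\bar q)\|_{L^2}+\|A(P_{\W_R}\bar q)-A(\q)\|_{L^2},
\]
and bounding the last term by $\frac78\|P_{\W_R}\bar q-\q\|_{L^2}$ (both arguments lie in $\W_R$), yields $\frac18\|P_{\W_R}\bar q-\q\|_{L^2}\le C\eta+(1+\beta)\epsilon$; combined with $\|\bar q-\q\|_{L^2}\le\epsilon+\|P_{\W_R}\bar q-\q\|_{L^2}$ this gives \eqref{eq:q*qbar} with the stated constants. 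The main obstacle is the contraction step: making $\beta$ genuinely small and uniform over $n=1,\dots,N$ while the Faddeev frequencies satisfy only $|\zeta_n|\le DN$, and simultaneously keeping the noise subcritical so that the Neumann series stays contractive — this is exactly where the CGO remainder estimates of \cite{alberti2018}, Lemma~\ref{lem:Sqbound}, Proposition~\ref{prop:Sqinv} and the threshold $\eta\le S$ must be combined with care.
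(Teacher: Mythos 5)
Your proposal follows essentially the same route as the paper's proof: the Banach fixed-point theorem for $A$ on the complete metric space $\W_R$ with contraction constant $\frac78$ assembled from $\|P_N^\perp FP_\W\|\le\frac14$, the $\frac12$-contraction of $B$ and the $L$-damped tail of $T$; the exact noiseless identity $\tilde A(\bar q)=P_{\W_R}(\bar q)$ obtained by summing the Neumann series of the boundary integral equation, perturbed by $C\eta$ when the noise is restored; and the same triangle-inequality combination, with the full Lipschitz constant on $L^\infty_R(\Omega)$ used for the term $A(\bar q)-A(P_{\W_R}\bar q)$, yielding $14\epsilon+C\eta$. The one imprecision is the claim that the low-frequency Lipschitz constant $\beta$ can be made arbitrarily small: the $B$-contribution is fixed at $\frac12$ by the CGO remainder estimates of \cite{alberti2018} and only the $T$-tail is tunable via $L$, but since your arithmetic only needs $\beta\le\frac58$ (giving $\frac14+\frac58=\frac78$ and $(9+8\beta)\epsilon\le14\epsilon$), the argument goes through exactly as in the paper.
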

\begin{remark*}
As expected from Theorem~\ref{thm:stab}, in absence of noise ($\eta=0$) and without modeling errors ($\bar q\in \W$), the unknown $\bar q$ may be recovered exactly, as the limit $\bar q=\lim_n q^n$. Further, it is worth observing that the stability with respect to noise in the data  and with respect to modeling errors given in \eqref{eq:q*qbar} is consistent with the estimate of Theorem~\ref{thm:stab}: the factor $\epsilon$ is multiplied by an absolute constant, while the noise level $\eta$ by a constant that becomes larger as $\dim\W$ increases. 
\end{remark*}
\begin{remark*}
This result may be seen as a first step towards a new regularization strategy for Gel'fand-Calder\'on's and Calder\'on's problems \cite{rondi2008,knudsen2009,rondi2016}, by considering an exhaustive sequence of nested subspaces $\W_m$. In this case the method would fall into the classes of regularization by projection \cite{engl1996regularization} and regularization by discretization \cite{scherzer_book}.
\end{remark*}

\begin{proof}
With an abuse of notation, several different positive constants depending only on $\Omega$, $R$,  $C_\rho$, $\W$ and $\|(-\Delta+q_0)^{-1}\|_{H^{-1}(\Omega)\to H^1_0(\Omega)}$ will be denoted by the same letter $c$. The proof is divided into four steps.

\textit{Step 1: $A$ is Lipschitz continuous.} In view of \cite[Lemma~1]{alberti2018} we have that $B$ is a contraction on $L^\infty_R(\Omega)$, namely
\begin{equation}\label{eq:Bcontraction}
\left\|B(q_{2})-B(q_{1})\right\|_{\ell^{2}} \leqslant \frac{1}{2}\left\|q_{2}-q_{1}\right\|_{L^{2}\left(\Omega\right)},\qquad q_1,q_2\in L^\infty_R(\Omega).
\end{equation}
Further, thanks to Parseval's identity the map $F$ is an isometry, and so for $q_1,q_2\in L^\infty_R(\Omega)$ we have
\begin{equation}\label{eq:AT}
\norm{A(q_2)-A(q_1)}_{L^2(\T^d)}\le \norm{P_NT(q_2)-P_NT(q_1)}_{\ell^2}+\frac32\norm{q_2-q_1}_{L^2(\Omega)},
\end{equation}
where we also used that $P_{\W_R}$ is Lipschitz continuous with constant $1$ by the Hilbert projection theorem. 
It remains to estimate the term with $P_NT$.

For $n\in\N$ we have
\[
(T(q_2) - T(q_1))_n = \left\langle  e^{\tilde\zeta_n\cdot x}, (\Lanoise - \Lambda_{0}) (S^{q_0}_{\zeta_n} (\Lanoise-\Lambda_{q_0}))^{L+1}  (f^{n}_{q_2}- f^{n}_{q_1}) \right\rangle_{{ H^{\frac12} \times  H^{-\frac12}}},
\]
which gives for $n\in\{1,\dots,N\}$
\begin{align*}
|(T(q_2) - T(q_1))_n| &\leq  e^{c|\tilde \zeta_n|} \|(\Lanoise - \Lambda_{0}) (S^{q_0}_{\zeta_n} (\Lanoise-\Lambda_{q_0}))^{L+1}  (f^{n}_{q_2}- f^{n}_{q_1})\|_{H^{\frac 12}(\partial \Omega)}\\
&\leq c \, \| S^{q_0}_{\zeta_n} (\Lambda_{\bar q}-\Lambda_{q_0}+E)\|^{L+1}_{H^{\frac12}\to H^{\frac12}}  \|f^{n}_{q_2}- f^{n}_{q_1}\|_{H^{\frac 12}(\partial \Omega)}\\
&\leq c \,(1/2+c\eta)^{L+1}  \|f^{n}_{q_2}- f^{n}_{q_1}\|_{H^{\frac 12}(\partial \Omega)},
\end{align*}
where we used \eqref{eq:normS}, \eqref{eq:12} and  Lemma~\ref{lem:lambdif}. Choose $S=\frac{1}{4c}$, so that $c\eta\le\frac14$ for $\eta\le S$. Thus, by \eqref{eq:normS},  \eqref{eq:normf}, \eqref{eq:intgralq1q2} and  Lemma~\ref{lem:lambdif}, we have
\begin{align*}
|(T(q_2) - T(q_1))_n|&\leq c\,(3/4)^{L+1}\left\|S^{q_1}_{\zeta_n}(\Lambda_{q_1}-\Lambda_{q_2})f_{q_2}^n \right\|_{H^{\frac 12}(\partial \Omega)}\\
&\leq c\,(3/4)^{L+1}\|q_2-q_1\|_{L^2(\Omega)}.
\end{align*}
As a result, we have
$
\|P_N(T(q_2)-T(q_1))\|_{\ell^2}\le c\,(3/4)^{L+1}\|q_2-q_1\|_{L^2(\Omega)}.
$
Choose $L$ sufficiently large so that $c\,(3/4)^{L+1} \leq \frac 18$ (the constant $\frac18$ will be handy below). Then
\begin{equation}\label{eq:PN}
\|P_N(T(q_2)-T(q_1))\|_{\ell^2}\le \frac18\|q_2-q_1\|_{L^2(\Omega)},\qquad q_1,q_2 \in L^\infty_R(\Omega).
\end{equation}
(From now on, $L$ is fixed). Thus,  by \eqref{eq:AT} we obtain
\begin{equation}\label{eq:Lip}
\| A(q_2) - A(q_1)\|_{L^2(\T^d)} \leq \frac{13}{8}\|q_2 -q_1\|_{L^2(\Omega)}, \quad q_1,q_2 \in L^\infty_R(\Omega).
\end{equation}

\textit{Step 2: $A|_{\W_R}$ is a contraction and has a fixed point.} The number of measurements $N$, which is given in Theorem~\ref{thm:stab0}, is chosen so that
$
\left\|P_{N}^{\perp} F P_{\mathcal{W}}\right\|_{L^{2}\left(\mathbb{T}^{d}\right) \rightarrow \ell^{2}} \leqslant \frac{1}{4}
$ \cite{alberti2018}.
Thus, \eqref{eq:A}, \eqref{eq:Bcontraction} and \eqref{eq:PN} yield
\begin{equation}\label{eq:contraction}
\| A(q_2) - A(q_1)\|_{L^2(\T^d)} \leq \frac{7}{8}\|q_2 -q_1\|_{L^2(\Omega)}, \quad q_1,q_2 \in \W_R.
\end{equation}
Note that $\W_R$ is a complete metric space with the distance given by the $L^2$ norm. As a consequence, the Banach fixed-point theorem yields the existence of a fixed point $\q$, and \eqref{eq:banach} holds. It remains to prove \eqref{eq:q*qbar}.

\textit{Step 3: $\norm{A(\bar q)-P_{\W_R}(\bar q)}_{L^2(\T^d)}\le c\eta$.} Let us consider the map $\tilde A\colon L^\infty_R(\Omega) \to \W_R$ corresponding to the noiseless case. Namely, we define
\begin{equation*}
\tilde A(q) = P_{\W_R}( F^{-1}P_N \tilde T(q) + F^{-1}P_N^\perp Fi(q) - F^{-1}P_N B  (q)),
\end{equation*}
where
\begin{equation*}
(\tilde T(q))_n = \left\langle  e^{\tilde\zeta_n\cdot x}, (\Lambda_{\bar q} - \Lambda_{0}) \left( \tilde f^L_n+(S^{q_0}_{\zeta_n} (\Lambda_{\bar q}-\Lambda_{q_0}))^{L+1}  f^{n}_q\right) \right\rangle_{{ H^{\frac12}(\partial \Omega) \times  H^{-\frac12}(\partial \Omega)}},
\end{equation*}
and 
\begin{equation*}
\tilde f^L_{n} = \sum_{l=0}^L  (S^{q_0}_{\zeta_n} (\Lambda_{\bar q} -\Lambda_{q_0}))^l f^n_{q_0}.
\end{equation*}

We start by noting that $\tilde A(\bar q)=P_{\W_R}\bar q$. Indeed, observe that $f_{\bar q}^n$ solves the boundary integral equation
\[
f_{\bar q}^n = \tilde f^L_n+(S^{q_0}_{\zeta_n} (\Lambda_{\bar q}-\Lambda_{q_0}))^{L+1}  f^{n}_{\bar q},
\]
which follows immediately from \eqref{eq:bie}. Thus $\tilde T(\bar q) = F\bar q + B(\bar q)$ and so
\[
\begin{split}
\tilde A(\bar q)&= P_{\W_R}( F^{-1}P_N (F\bar q + B(\bar q)) + F^{-1}P_N^\perp F\bar q - F^{-1}P_N B  (\bar q)), \\
&= P_{\W_R}(  F^{-1}P_N F\bar  q+ F^{-1}P_N^\perp F\bar q )\\
&=P_{\W_R}(\bar q).
\end{split}
\]

Since $P_{\W_R}$ is non-expansive and $F$ is an isometry, we have
\[
\norm{A(\bar q)-P_{\W_R}(\bar q)}_{L^2(\T^d)}=\|A(\bar q)-\tilde A(\bar q)\|_{L^2(\T^d)}\le \|P_N\tilde T(\bar q)-P_NT(\bar q)\|_{\ell^2}.
\]
Setting
\[
a_n=\tilde f^L_n+(S^{q_0}_{\zeta_n} (\Lambda_{\bar q}-\Lambda_{q_0}))^{L+1}  f^{n}_{\bar q},\qquad b_n=f^L_n+(S^{q_0}_{\zeta_n} (\Lanoise-\Lambda_{q_0}))^{L+1}  f^{n}_{\bar q},
\]
by using again that $|\tilde\zeta_n|\le c$ and the triangle inequality, we readily derive
\[
\begin{split}
|(\tilde T(\bar q)-T(\bar q))_n| &= |\langle  e^{\tilde\zeta_n\cdot x}, (\Lambda_{\bar q} - \Lambda_{0}) ( a_n ) -
 (\Lanoise - \Lambda_{0}) (  b_n )
 \rangle_{{ H^{\frac12} \times  H^{-\frac12}}}|\\
 &\le c \|(\Lambda_{\bar q} - \Lambda_{0}) ( a_n ) -
 (\Lanoise - \Lambda_{0}) (  b_n )\|_{H^{-\frac12}}\\
  &\le c \|(\Lambda_{\bar q} - \Lambda_{0}) ( a_n-b_n )\|_{H^{-\frac12}} +
c\| (\Lanoise - \Lambda_{\bar q}) (  b_n )\|_{H^{-\frac12}}\\
&\le c\|\Lambda_{\bar q} - \Lambda_{0}\|_*\|a_n-b_n\|_{H^{\frac12}}
+c\| E \|_*\|b_n\|_{H^{\frac12}}\\
&\le c \|a_n-b_n\|_{H^{\frac12}}
+c\eta,
 \end{split}
\]
where the last inequality follows from \eqref{eq:normS},  \eqref{eq:normf} and  Lemma~\ref{lem:lambdif}. It remains to estimate $\|a_n-b_n\|_{H^{\frac12}}$. Using again \eqref{eq:normf} we obtain
\[
\begin{split}
\|a_n-b_n\|_{H^{\frac12}}&\le \|
\tilde f^L_n-f^L_n\|_{H^{\frac12}}+\|\bigl((S^{q_0}_{\zeta_n} (\Lambda_{\bar q}-\Lambda_{q_0}))^{L+1}  
-(S^{q_0}_{\zeta_n} (\Lanoise-\Lambda_{q_0}))^{L+1}\bigr)  f^{n}_{\bar q}\|_{H^{\frac12}}\\
&\le c \sum_{l=0}^{L+1}  \|(S^{q_0}_{\zeta_n} (\Lambda_{\bar q} -\Lambda_{q_0}))^l -(S^{q_0}_{\zeta_n} (\Lanoise -\Lambda_{q_0}))^l \|_{H^{\frac12}\to H^{\frac12}}.
\end{split}
\]
Arguing as in \eqref{eq:A-B}, we can bound the last term with $c\|\Lambda_{\bar q}-\Lanoise\|_*\le c\eta$, so that $\|a_n-b_n\|_{H^{\frac12}}\le c\eta$. Altogether, we have
\begin{equation}\label{eq:boring}
\norm{A(\bar q)-P_{\W_R}(\bar q)}_{L^2(\T^d)}\le \|P_N\tilde T(\bar q)-P_NT(\bar q)\|_{\ell^2}\le c\eta,
\end{equation}
as desired.

\textit{Step 4: Proof of \eqref{eq:q*qbar}.}  Since $\q$ is a fixed point of $A$, we have
\begin{multline*}
\norm{\q-P_{\W_R}(\bar q)}_{L^2(\T^d)}\le \norm{A(\q)-A(P_{\W_R}(\bar q))}_{L^2(\T^d)}\\ +\norm{A(P_{\W_R}(\bar q))-A(\bar q)}_{L^2(\T^d)}+\norm{A(\bar q)-P_{\W_R}(\bar q)}_{L^2(\T^d)}.
\end{multline*}
Thus, by \eqref{eq:Lip} , \eqref{eq:contraction} and \eqref{eq:boring}
we obtain
\[
\begin{split}
\norm{\q-P_{\W_R}(\bar q)}_{L^2(\T^d)}&\le\frac78\norm{\q-P_{\W_R}(\bar q)}_{L^2(\T^d)}+\frac{13}{8}\norm{P_{\W_R}(\bar q)-\bar q}_{L^2(\T^d)}+c\eta
\\&\le\frac78\norm{\q-P_{\W_R}(\bar q)}_{L^2(\T^d)}+\frac{13}{8}\epsilon+c\eta,
\end{split}
\]
so that
$
\norm{\q-P_{\W_R}(\bar q)}_{L^2(\T^d)}\le 13\epsilon+c\eta.
$
Finally, we have
\[
\norm{\q-\bar q}_{L^2(\T^d)}\le \norm{\q-P_{\W_R}(\bar q)}_{L^2(\T^d)}+\norm{P_{\W_R}(\bar q)-\bar q}_{L^2(\T^d)}\le 14\epsilon+c\eta.
\]
This concludes the proof.
\end{proof}

\section{Layer potentials estimates and invertibility properties}\label{sec:layer}

This section is devoted to the proof of new properties of the generalized layer potential.

Throughout this section, we let $\Omega \subseteq \R^d$, $d\geq 2$ be an open bounded domain with Lipschitz boundary, and $q \in L^{\infty}(\R^d)$ be a potential satisfying \eqref{hyp:dir} and such that $\supp (q) \subseteq \Omega$, $\|q\|_{L^{\infty}(\Omega)}\leq R$, for some $R >0$.

We recall from the previous section the following functions:
\begin{align}
g_\zeta(x) &= \left( \frac{1}{2\pi}\right)^d \int_{\R^d}\frac{e^{i\xi \cdot x} }{\xi \cdot \xi + 2 \zeta \cdot \xi}\,d\xi,\notag\\
G_\zeta(x) &= e^{i \zeta \cdot x} g_\zeta(x),\notag \\ \label{def:Gq}
G^q_\zeta(x,y) &= G_\zeta(x-y) - \int_{\R^d}G_\zeta(x-z)q(z)G^q_\zeta(z,y)dz,\quad x,y \in \R^d, x\neq y,\\ \label{def:gensin}
S^q_\zeta f(x) &= \int_{\partial \Omega}G^q_\zeta(x,y) f(y)d\sigma(y),\quad x \in \R^d,\\ \label{def:gq}
 g^q_\zeta(x,y) &= e^{-i \zeta \cdot (x-y)}G^q_\zeta(x,y),\quad x,y \in \R^d, x\neq y.
\end{align}
Note that
\begin{equation*}
-\Delta G_\zeta(x-y) = (-\Delta + q(x))G^q_\zeta(x,y) = \delta(x-y).
\end{equation*}
 We also introduce the generalized double layer potential
\begin{equation*}
D^q_\zeta f(x) = \int_{\partial \Omega}\frac{\partial G^q_\zeta}{\partial \nu_y}(x,y)f(y) d\sigma(y), \quad x \in \R^d \setminus \partial \Omega,
\end{equation*}
and the generalized boundary, or trace, double layer potential by
\begin{equation*}
B^q_\zeta f(x) = \pv \int_{\partial \Omega}\frac{\partial G^q_\zeta}{\partial \nu_y}(x,y)f(y) d\sigma(y), \quad x \in \partial \Omega.
\end{equation*}
Since the singularity of $G^q_\zeta ( x,y)$ for $x$ near $y$ is the same as that of $G_\zeta(x,y)$ (see \cite[Theorem 7.1]{littman1963}), it is locally integrable on $\partial \Omega$ and the trace single layer potential is given by \eqref{def:gensin}. Finally, let us consider the operator $G^q_\zeta$ defined by
\begin{equation*}
(G^q_\zeta f)(x) = \int_\Omega G^q_\zeta(x,y)f(y)dy.
\end{equation*}
We start by showing that $S^q_\zeta$ is bounded.
\begin{lemma}\label{lem:Sqbound}
Let $D>0$ and $\tilde \Omega$ be a bounded $C^{1,1}$ neighborhood  of $\Omega$. There exists $ c_1=c_1(R)>0$ such that, for every $c_1\le |\zeta| \le D$, we have \begin{equation}\label{eq:GqH2}
\|G^q_\zeta f\|_{H^2(\tilde \Omega)} \leq C(D,R, \tilde \Omega) \|f\|_{L^2(\Omega)},\qquad f\in L^2(\Omega),
\end{equation}
and
\begin{equation}\label{eq:Sqbound}
\| S^q_\zeta f\|_{H^{1/2}(\partial \Omega)}\leq C(D,R,  \Omega) \|f\|_{H^{-1/2}(\partial \Omega)},\qquad f\in H^{-1/2}(\partial \Omega).
\end{equation}
\end{lemma}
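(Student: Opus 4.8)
The plan is to derive both bounds from two classical facts about the Faddeev--Green function: the Sylvester--Uhlmann weighted $L^2$ estimate, and the fact that $G_\zeta$ has the same singularity as the Newtonian kernel $\Phi$, so that $G_\zeta-\Phi$ is harmonic, hence smooth. Write $\mathcal G_\zeta$ for the convolution $f\mapsto G_\zeta*f$, $\mathcal K_\zeta$ for $f\mapsto g_\zeta*f$, $M_q$ for multiplication by $q$, and $E_\zeta$ for multiplication by $e^{i\zeta\cdot x}$. Since $G_\zeta=e^{i\zeta\cdot x}g_\zeta$ and multiplications commute, $\mathcal G_\zeta=E_\zeta\mathcal K_\zeta E_\zeta^{-1}$ and $I+\mathcal G_\zeta M_q=E_\zeta(I+\mathcal K_\zeta M_q)E_\zeta^{-1}$, where on bounded sets $E_\zeta^{\pm1}$ are bounded by a constant depending only on $D$. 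The Sylvester--Uhlmann estimate \cite{Sylvester1987} gives $\norm{\mathcal K_\zeta M_q}_{L^2(\tilde\Omega)\to L^2(\tilde\Omega)}\le CR/|\zeta|$ with $C$ independent of $\zeta$ (weighted and unweighted $L^2$ being comparable on the bounded set $\supp q$), so this norm is $\le\frac12$ once $|\zeta|\ge c_1(R):=2CR$; this is the sole origin of $c_1$, and it depends only on $R$ and the fixed domain. Hence $I+\mathcal K_\zeta M_q$, and therefore $I+\mathcal G_\zeta M_q$, is invertible on $L^2(\tilde\Omega)$ with $\norm{(I+\mathcal G_\zeta M_q)^{-1}}\le C(D,R)$. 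This single fact drives both estimates.

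For \eqref{eq:GqH2} I would first get an $L^2$ bound and then boost it by elliptic regularity. Reading \eqref{def:Gq} as the operator identity $(I+\mathcal G_\zeta M_q)G^q_\zeta=\mathcal G_\zeta$ yields $G^q_\zeta=(I+\mathcal G_\zeta M_q)^{-1}\mathcal G_\zeta$, so $\norm{G^q_\zeta f}_{L^2(\tilde\Omega)}\le C\norm{\mathcal G_\zeta f}_{L^2(\tilde\Omega)}\le C\norm{f}_{L^2(\Omega)}$, the last step again by Sylvester--Uhlmann applied to $\mathcal G_\zeta=E_\zeta\mathcal K_\zeta E_\zeta^{-1}$. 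Now $u=G^q_\zeta f$ solves $-\Delta u=f-qu$ in $\R^d$ (with $f,q$ extended by zero), and $\norm{f-qu}_{L^2(\tilde\Omega')}\le\norm{f}_{L^2(\Omega)}+R\norm{u}_{L^2(\tilde\Omega')}\le C\norm{f}_{L^2(\Omega)}$ on a slightly larger neighbourhood $\tilde\Omega\Subset\tilde\Omega'$. The interior $H^2$ estimate for $-\Delta$ then gives $\norm{u}_{H^2(\tilde\Omega)}\le C(\norm{f-qu}_{L^2(\tilde\Omega')}+\norm{u}_{L^2(\tilde\Omega')})\le C\norm{f}_{L^2(\Omega)}$, which is \eqref{eq:GqH2}. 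The case $q=0$ gives the same $H^2$ bound for $\mathcal G_\zeta$, reused below.

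For \eqref{eq:Sqbound} I would control the whole single layer potential $U=S^q_\zeta f$, regarded as a function on $\tilde\Omega$, in $H^1(\tilde\Omega)$, and then take the trace on $\partial\Omega$. Splitting $G_\zeta=\Phi+K_\zeta$ with $K_\zeta=G_\zeta-\Phi$ smooth and with derivatives bounded on bounded sets uniformly for $|\zeta|\le D$ (its local regularity being that of \cite{littman1963}), the free single layer $S^0_\zeta f$ is the classical Newtonian single layer plus a smoothing operator with smooth kernel; by the mapping properties of single layer potentials on Lipschitz domains it sends $H^{-1/2}(\partial\Omega)\to H^1(\tilde\Omega)$ boundedly and uniformly in $\zeta$. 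Integrating \eqref{def:Gq} against $f\,d\sigma$ and exchanging the order of integration gives $U=S^0_\zeta f-\mathcal G_\zeta(q\,U)$, i.e. $(I+\mathcal G_\zeta M_q)U=S^0_\zeta f$; by the invertibility above, $\norm{U}_{L^2(\tilde\Omega)}\le C\norm{S^0_\zeta f}_{L^2(\tilde\Omega)}\le C\norm{f}_{H^{-1/2}(\partial\Omega)}$. Feeding this back and using the $q=0$ $H^2$ bound, $\norm{\mathcal G_\zeta(qU)}_{H^2(\tilde\Omega)}\le C\norm{qU}_{L^2}\le C\norm{U}_{L^2(\tilde\Omega)}\le C\norm{f}_{H^{-1/2}}$, whence $\norm{U}_{H^1(\tilde\Omega)}\le\norm{S^0_\zeta f}_{H^1(\tilde\Omega)}+\norm{\mathcal G_\zeta(qU)}_{H^1(\tilde\Omega)}\le C\norm{f}_{H^{-1/2}}$. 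Restricting $U$ to the Lipschitz domain $\Omega$ and applying the trace theorem $H^1(\Omega)\to H^{1/2}(\partial\Omega)$ gives $\norm{S^q_\zeta f}_{H^{1/2}(\partial\Omega)}\le C\norm{U}_{H^1(\Omega)}\le C\norm{f}_{H^{-1/2}(\partial\Omega)}$, which is \eqref{eq:Sqbound}.

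The delicate point is the single layer estimate on the merely Lipschitz boundary: one must isolate the Newtonian part of $G_\zeta$, invoke the $H^{-1/2}(\partial\Omega)\to H^1(\tilde\Omega)$ mapping property of the classical single layer operator on Lipschitz domains, and check that the smooth remainder $K_\zeta$ and the Sylvester--Uhlmann constant are uniform over the compact annulus $c_1\le|\zeta|\le D$. By contrast, \eqref{eq:GqH2} is routine once the Neumann series and the interior elliptic estimate are in place. A secondary subtlety, already addressed above, is that $c_1$ must depend only on $R$: this forces one to run the Neumann series in the conjugated kernels $g_\zeta,g^q_\zeta$, where the exponential factors $E_\zeta^{\pm1}$ cancel, rather than directly in $G_\zeta,G^q_\zeta$, whose conversion contributes only the $D$-dependence of the final constant.
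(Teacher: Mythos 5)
Your proof is correct and reaches both estimates, but it deviates from the paper's argument at two points, so a comparison is worthwhile. The common core is identical: the weighted $L^2_{\pm\delta}$ resolvent estimate for $g_\zeta*{}$ (the paper cites \cite[Proposition 2.1.b)]{nachman1988} rather than \cite{Sylvester1987}, but it is the same bound), a Neumann series run in the conjugated kernel so that the smallness threshold $c_1$ depends only on $R$ (and the fixed domains), and conjugation back by $e^{\pm i\zeta\cdot x}$ contributing the $D$-dependence. Where you differ: (i) for \eqref{eq:GqH2} the paper upgrades the weighted $L^2_{-\delta}$ bound on $g^q_\zeta$ to a weighted $H^2_\delta$ bound by invoking \cite[Lemma~2.11]{nachman1988} (based on \cite{lavine1981}) and then restricts, whereas you stay in unweighted spaces and boost $L^2(\tilde\Omega')\to H^2(\tilde\Omega)$ by interior elliptic regularity for $-\Delta u=f-qu$ on nested neighbourhoods; this is more elementary and avoids the weighted Sobolev machinery, at the cost of introducing the intermediate domain $\tilde\Omega'$. (ii) For \eqref{eq:Sqbound} the paper uses the \emph{transposed} integral equation \eqref{id:ggq}, giving $S^q_\zeta f=S_\zeta f-G^q_\zeta(qS_\zeta f)$ with no further inversion, and then the boundary-to-boundary bound $\|S_\zeta f\|_{H^{1/2}(\partial\Omega)}\le c\|f\|_{H^{-1/2}(\partial\Omega)}$ (\cite[Lemma 2.3]{nachman1988} extended to Lipschitz boundaries via \cite{Nachman1996,verchota1984}) together with harmonicity of $S_\zeta f$ in $\Omega$; you instead solve the untransposed equation $(I+\mathcal G_\zeta M_q)S^q_\zeta f=S_\zeta f$ by the already-established Neumann series and control the free layer in $H^1(\tilde\Omega)$ through the splitting $G_\zeta=\Phi+(G_\zeta-\Phi)$ with $G_\zeta-\Phi$ harmonic, relying on the classical $H^{-1/2}(\partial\Omega)\to H^1_{\mathrm{loc}}$ mapping property of the Newtonian single layer on Lipschitz domains. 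Both routes ultimately rest on the same Lipschitz layer-potential theory, and the transposed identity the paper derives is reused later (e.g.\ in Lemma~\ref{lem:jump}), which your version would not provide for free. Two points in your write-up deserve the care you already flag: the uniform boundedness of $G_\zeta-\Phi$ and its derivatives on compacta over the annulus $c_1\le|\zeta|\le D$ must be justified (harmonicity plus a locally uniform $L^1$ or $L^2$ bound on $g_\zeta$ suffices), and the harmonicity of $G_\zeta-\Phi$, like the paper's own use of $\Delta S_\zeta f=0$, implicitly uses $\zeta\cdot\zeta=0$, which holds for the $\zeta_n$ to which the lemma is applied.
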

\begin{remark}
The constants $C$ can be estimated using similar ideas as in \cite[Lemma~2.2]{knudsen2009}, and they grow exponentially in $D$.
\end{remark}

\begin{proof}
In the proof, with an abuse of notation, several different positive absolute constants will be denoted by the same letter $c$. We first prove \eqref{eq:GqH2} and then \eqref{eq:Sqbound}.

\textit{Proof of \eqref{eq:GqH2}.}
For $\delta=\frac34$ (the argument below works for any $\delta\in (\frac12,1)$, but for our purposes it is enough to let $\delta=\frac34$), consider the Hilbert space 
\[
L^2_\delta(\R^d) =\left\{ f : \| f\|_\delta = \left( \int_{\R^d} (1+|x|^2)^\delta |f(x)|^2\right)^{1/2}<+\infty \right\}.
\]
From \cite[Proposition 2.1.b)]{nachman1988} we have that, for $\zeta \in \C^d \setminus \R^d$ with $|\zeta| \geq 1$, 
\begin{equation}\label{est:Gzeta}
\|g_\zeta \ast f\|_{-\delta} \leq \frac{c}{|\zeta|}\|f\|_\delta, \qquad f\in L^2_\delta(\R^d),
\end{equation}
for an absolute constant $c>0$.
Now, let $g^q_\zeta$ be the operator defined by
\begin{equation*}
(g^q_\zeta f)(x) = \int_{\R^d} g^q_\zeta(x,y)f(y)dy,\qquad f\in L^2_\delta(\R^d).
\end{equation*}
We want to extend \eqref{est:Gzeta} to $g^q_\zeta f$. From \eqref{def:gq} and the integral equation \eqref{def:Gq} we have
\begin{align*}
g_\zeta^q f(x) &= g_\zeta * f(x)- \int_{\R^d} g_\zeta (x-z)q(z)(g^q_\zeta f)(z)dz\\
&= g_\zeta * f(x)-(g_\zeta * \textbf{q} (g^q_\zeta f)) (x),
\end{align*}
where $\textbf{q}$ denotes the operator of multiplication by $q$, which maps $L^2_{-\delta}$ to $L^2_\delta$ with norm bounded by $\|q(x)(1+|x|^2)^\delta\|_{L^\infty(\R^d)}$. For $|\zeta| \geq c\|q(x)(1+|x|^2)^\delta\|_{L^\infty(\R^d)}$ the operator $g_\zeta * \textbf{q}$ maps $L^2_{-\delta}$ into itself and  $\|g_\zeta * \textbf{q}\|_{L^2_{-\delta}\to L^2_{-\delta}}\le 1/2$ thanks to \eqref{est:Gzeta} (see \cite[Corollary 2.2]{nachman1988} for more details). Thus
\begin{align*}
g_\zeta^q f = (I + g_\zeta * \textbf{q})^{-1}(g_\zeta * f),
\end{align*}
and we obtain, using \eqref{est:Gzeta},
\begin{equation*}
\|g^q_\zeta f\|_{-\delta} \leq \frac{c}{|\zeta|}\|f\|_\delta,\qquad f\in L^2_\delta(\R^d),
\end{equation*}
for $|\zeta|\ge c(R)$.
Consider now $H^2_\delta(\R^d) = \{f\colon D^\alpha f \in L^2_\delta(\R^d), 0 \leq |\alpha| \leq 2\}$, the weighted Sobolev space with norm
\[
\|f\|_{2,\delta} = \left(\sum_{|\alpha|\leq 2} \|D^\alpha f\|_\delta^2\right)^{1/2}.
\]
Using the same ideas as in the proof of \cite[Lemma~2.11]{nachman1988}, based on \cite{lavine1981}, we obtain
\begin{equation}\label{eq:gqH2}
\|g^q_\zeta f\|_{2,-\delta} \leq c(D,R) \|f\|_\delta, \qquad f\in L^2_\delta(\R^d).
\end{equation}
Then the main estimate \eqref{eq:GqH2} is a direct consequence of \eqref{eq:gqH2}, \eqref{def:gq} and the boundedness of $\tilde \Omega$ and $\Omega$, since
\[
G^q_\zeta f(x)=e^{i\zeta\cdot x}\int_\Omega g^q_\zeta(x,y) e^{-i\zeta\cdot y} f(y)\,dy=e^{i\zeta\cdot x} g^q_\zeta(e^{-i\zeta\cdot y} f)(x).
\]

\textit{Proof of \eqref{eq:Sqbound}.}
Using similar arguments as in \cite[Section 6]{nachman1988} we can rewrite equation \eqref{def:Gq} as follows:
\begin{equation}\label{id:ggq}
G^q_\zeta(x,y) = G_\zeta(x-y) - \int_{\R^d}G^q_\zeta(x,z)q(z)G_\zeta(z-y)dz,
\end{equation}
which yields the identity
\begin{equation}\label{id:ssq}
S^q_\zeta f(x) - S_\zeta f(x) =- \int_{\R^d}G^q_\zeta(x,z)q(z)S_\zeta f(z)dz,
\end{equation}
where $S_\zeta$ is the generalized single layer potential for $q = 0$. We recall \cite[Lemma 2.3]{nachman1988}, which states, for $0 \leq s \leq 1$,
\begin{equation*}
\| S_\zeta f\|_{H^{s+1}(\partial \Omega)}\leq c(D,s,\Omega) \|f\|_{H^s(\partial \Omega)},
\end{equation*}
for $\partial \Omega \in C^{1,1}$. This can be easily extended to $-1\leq s \leq 0$ and $\partial \Omega$ Lipschitz using the same arguments as in the proof of \cite[Lemma 7.1]{Nachman1996}.
Therefore, by the trace theorem and \eqref{eq:GqH2}, we have
\begin{align*}
\| S^q_\zeta f\|_{H^{1/2}(\partial \Omega)}&\leq c(\Omega,D) \|f\|_{H^{-1/2}(\partial \Omega)} +c(\Omega) \left\|\int_{\R^d}G^q_\zeta(\cdot,z)q(z)S_\zeta f(z)dz\right\|_{H^1(\Omega)}\\
&\leq c(D,R,\Omega)\left( \|f\|_{H^{-1/2}(\partial \Omega)} + \| S_\zeta f\|_{L^2(\Omega)}\right)\\
&\leq c(D,R,\Omega)\|f\|_{H^{-1/2}(\partial \Omega)}.
\end{align*}
Here we have used the fact that $u := S_\zeta f$ solves $\Delta u= 0$ in $\Omega$ (see \cite[Lemma~2.4]{nachman1988}) so, by interior regularity one has $\| S_\zeta f\|_{H^1(\Omega)}\leq c(\Omega) \| S_\zeta f\|_{H^{1/2}(\partial \Omega)}$ (see \cite[Theorem~3]{savare1998}).
\end{proof}

In order to study the invertibility of $S^q_\zeta$, we need other technical results. We start with the following solvability result for an exterior Dirichlet problem.
Let $\rho_0 >0$ be such that $\Omega \subseteq B_{\rho_0} = \{ x \in \R^d: |x| < \rho_0\}$. For $\rho > \rho_0$ let $\Omega'_\rho = B_\rho \setminus  \overline\Omega$.

\begin{lemma}\label{lem:ext}
Suppose that  $\Omega' = \R^d \setminus \overline \Omega$ is connected and let $\zeta\in\C^d$ be such that $|\zeta|\ge c_1$, where $c_1$ is given by Lemma~\ref{lem:Sqbound}. For any $f \in H^{1/2}(\partial \Omega)$ there is a unique solution $u$ to the exterior Dirichlet problem:
\begin{itemize}
\item $\Delta u = 0$ in $\Omega'$,
\item $u \in H^2(\Omega'_\rho)$, for any $\rho > \rho_0$,
\item $u$ satisfies the following generalized Sommerfeld radiation condition:
$$\lim_{\rho \to +\infty} \int_{|y| = \rho}\left( G^q_\zeta(x,y)\frac{\partial u}{\partial \nu_y}(y)-u(y)\frac{\partial G^q_\zeta}{\partial \nu_y}(x,y) \right)d\sigma(y) = 0, \quad \text{a.e. } x \in \Omega',$$
\item $u|_{\partial \Omega'} = f.$
\end{itemize}
\end{lemma}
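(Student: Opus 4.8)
The plan is to treat this as a classical exterior boundary value problem: reduce solvability to a boundary integral equation and then invoke the Fredholm alternative, so that \emph{existence follows from uniqueness}. The key structural fact, which I would record first, is that any layer potential built from the kernel $G^q_\zeta$ automatically solves $(-\Delta+q)u=0$ off $\partial\Omega$ --- hence $\Delta u=0$ in $\Omega'$, since $\supp q\subseteq\Omega$ --- and inherits the generalized Sommerfeld radiation condition of $G^q_\zeta$ itself (this is checked by feeding $G^q_\zeta(x,\cdot)$ and the layer potential into the same Green identity over $\{|y|=\rho\}$). Thus any ansatz of this type produces an admissible $u$ in the appropriate $H^2(\Omega'_\rho)$ class, and only the Dirichlet datum remains to be matched.

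For existence I would seek $u$ as a combined-layer potential of the form $u=(D^q_\zeta - i\,S^q_\zeta)\phi$ associated with $G^q_\zeta$, the combination being chosen so that the kernel of the resulting boundary operator corresponds exactly to uniqueness of the exterior problem. Taking the exterior trace and using the jump relations for the generalized double layer $B^q_\zeta$ reduces $u|_{\partial\Omega}=f$ to an equation $(\tfrac12 I+B^q_\zeta - i\,S^q_\zeta)\phi=f$ on $\partial\Omega$. The point is that this is a compact perturbation of an invertible operator: since $G_\zeta-\Phi_0$, the difference between the Faddeev and the Newtonian fundamental solutions of $-\Delta$, is harmonic and hence smooth, the free double layer differs from the classical Laplace one by a smoothing, thus compact, operator; and the $q$-correction $G^q_\zeta-G_\zeta$ appearing in \eqref{id:ggq}, together with the $H^2$-smoothing of $G^q_\zeta$ from Lemma~\ref{lem:Sqbound}, contributes further compact terms. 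Invertibility of $\tfrac12 I+K$ for the Laplacian on Lipschitz boundaries (Verchota's layer-potential theory) then makes the full operator Fredholm of index zero on the relevant trace spaces, so existence is equivalent to injectivity, i.e. to the uniqueness part.

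For uniqueness I would take $f=0$. Green's second identity on $\Omega'_\rho$ applied to $u$ and $G^q_\zeta(x,\cdot)$, combined with the radiation condition (which is precisely the vanishing of the boundary term over $\{|y|=\rho\}$ as $\rho\to+\infty$), yields the representation $u=-S^q_\zeta(\partial_\nu u)$ in $\Omega'$, the double-layer contribution dropping out because $u|_{\partial\Omega}=0$. I would then extend $v:=-S^q_\zeta(\partial_\nu u)$ to all of $\R^d$: it solves $(-\Delta+q)v=0$ in $\Omega$, is continuous across $\partial\Omega$ with interior trace equal to $u|_{\partial\Omega}=0$, and hence vanishes in $\Omega$ by hypothesis \eqref{hyp:dir}. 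The delicate point is the final step, namely deducing that the density $\partial_\nu u$ vanishes. This \emph{cannot} be closed by the single-layer jump relations alone: they are automatically consistent with the representation and yield no information. One must instead use the genuine content of the radiation condition, that $e^{-i\zeta\cdot x}v$ lies in the weighted space $L^2_{-\delta}$ and is governed by the Faddeev resolvent $g_\zeta\ast(\cdot)$, whose injectivity for $|\zeta|\ge c_1$ underlies the estimate \eqref{est:Gzeta} of \cite{nachman1988}; this forces $v\equiv0$, whence $\partial_\nu u=0$ and $u\equiv0$. The passage from $q\neq0$ to the free case is carried out through the identities \eqref{id:ggq}--\eqref{id:ssq}, which realize $S^q_\zeta$ as a compact perturbation of $S_\zeta$.

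The main obstacle is exactly this last uniqueness step. Everything up to ``$v=0$ in $\Omega$'' is soft potential theory, but extracting $\partial_\nu u=0$ requires exploiting the specific far-field and branch structure of the Faddeev Green function rather than generic elliptic arguments; this is where the threshold $|\zeta|\ge c_1$ and the weighted-$L^2$ analysis of \cite{nachman1988} are indispensable, and it is also what guarantees, via the Fredholm reduction above, that the exterior problem is solvable for every $f\in H^{1/2}(\partial\Omega)$.
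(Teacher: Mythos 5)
The paper does not actually prove this lemma: it states that it is a slight generalization of \cite[Lemma A.6]{nachman1988} (to $q\neq 0$, Lipschitz boundaries and $H^{1/2}$ data) and that the proof ``can be obtained with the same approach,'' so the relevant comparison is with Nachman's argument rather than with a written-out proof. Your overall architecture is consistent with that approach: reduce $q\neq 0$ to $q=0$ through the identities \eqref{id:ggq}--\eqref{id:ssq} and the smoothing of Lemma~\ref{lem:Sqbound}, obtain existence from a layer-potential ansatz whose boundary operator is a compact perturbation of an operator that Verchota's theory inverts on Lipschitz boundaries, and reduce everything to uniqueness of the homogeneous exterior problem via the Fredholm alternative.

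The genuine gap is that the decisive step, uniqueness, is asserted rather than proved. You correctly derive the representation $u=-S^q_\zeta(\partial_\nu u)$ in $\Omega'$ from the radiation condition, correctly conclude that the extension $v$ of this single layer vanishes in $\Omega$ by \eqref{hyp:dir}, and correctly observe that the jump relations of Lemma~\ref{lem:jumps} then yield no further information. But the claim that ``injectivity of the Faddeev resolvent $g_\zeta\ast(\cdot)$, which underlies \eqref{est:Gzeta}, forces $v\equiv 0$'' is not an argument: \eqref{est:Gzeta} is a norm bound on $g_\zeta\ast$ between weighted $L^2$ spaces, and it neither states nor implies that a potential $G^q_\zeta\ast\rho$ with $\rho$ supported on $\partial\Omega$ which vanishes on the open set $\Omega$ must also vanish on $\Omega'$ (that would be a unique-continuation statement for a nonlocal operator, which is exactly what has to be established). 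What is actually needed is the uniqueness of outgoing solutions, i.e.\ that the conjugated equation $(\Delta+2i\zeta\cdot\nabla)\mu=\rho$ has at most one solution in $L^2_{-\delta}$, $\delta>1/2$ (a Fourier-analytic fact about the characteristic variety of $\xi\cdot\xi+2\zeta\cdot\xi$), together with a verification that $e^{-i\zeta\cdot x}$ times the zero-extension of $u$ lies in that class. Tellingly, your sketch never uses the hypothesis that $\Omega'$ is connected, which appears in the statement and must enter the uniqueness argument. Since your existence proof is explicitly conditioned on this injectivity, the gap propagates to the whole lemma; it can only be closed by importing the actual proof of \cite[Lemma A.6]{nachman1988} and checking that it survives $q\neq0$, Lipschitz $\partial\Omega$ and $H^{1/2}$ data, which is precisely what the paper delegates to the reader.
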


This is a slight generalization of \cite[Lemma A.6]{nachman1988}, and the proof can be obtained with the same approach.\smallskip

We also need to establish some jump formulas for the single and the double layer potentials.
\begin{lemma}\label{lem:jumps}
Let $\zeta\in\C^d$ be such that $|\zeta|\ge c_1$, where $c_1$ is given by Lemma~\ref{lem:Sqbound}, $f \in H^{-1/2}(\partial \Omega)$ and $u = S^q_\zeta f$.
Then the nontangential limits $\partial u/\partial \nu_+$ (resp.\ $\partial u/\partial \nu_-$) of $\partial u/\partial \nu$ as the boundary $\partial \Omega$ is approached from outside (resp.\ inside) $\Omega$ satisfy:
\begin{equation}\label{id:jumps2}
\frac{\partial u}{\partial \nu_-}-\frac{\partial u}{\partial \nu_+} = f, \quad \text{a.e. on } \partial \Omega.
\end{equation}  
\end{lemma}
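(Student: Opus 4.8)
The plan is to reduce the jump relation for the generalized single layer potential $S^q_\zeta$ to the classical jump relation for the harmonic single layer potential, by peeling off two successively more regular remainders. The key point is that neither the passage from $q=0$ to a general $q$, nor the passage from the Faddeev kernel $G_\zeta$ to the standard fundamental solution of $-\Delta$, affects the singularity responsible for the jump; both corrections contribute a normal derivative that is continuous across $\partial\Omega$.

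First I would invoke the identity \eqref{id:ssq}, already established in this section, to write
$$S^q_\zeta f = S_\zeta f - V f, \qquad V f(x) = \int_\Omega G^q_\zeta(x,z)\,q(z)\,(S_\zeta f)(z)\,dz,$$
where $S_\zeta$ is the generalized single layer potential for $q=0$ and the integral is over $\Omega$ because $\supp q\subseteq\Omega$. As shown in the proof of Lemma~\ref{lem:Sqbound}, $S_\zeta$ maps $H^{-1/2}(\partial\Omega)$ boundedly into $H^1(\Omega)$, so $g:=q\,(S_\zeta f)$ lies in $L^2(\Omega)$; writing $Vf = G^q_\zeta g$ for the volume operator of Lemma~\ref{lem:Sqbound}, estimate \eqref{eq:GqH2} gives $Vf\in H^2(\tilde\Omega)$ on the $C^{1,1}$ neighborhood $\tilde\Omega$ of $\Omega$. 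Hence $\nabla Vf\in H^1(\tilde\Omega)$ admits a single-valued trace on $\partial\Omega\subseteq\tilde\Omega$, so $\partial Vf/\partial\nu$ is continuous across $\partial\Omega$ and contributes no jump. Therefore the jump of $\partial(S^q_\zeta f)/\partial\nu$ equals that of $\partial(S_\zeta f)/\partial\nu$.

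Next I would compare $S_\zeta$ with the classical single layer $S^{\mathrm{cl}}$, whose kernel is the standard fundamental solution $\Phi$ of $-\Delta$. Since $-\Delta G_\zeta=\delta=-\Delta\Phi$ (see the identity following \eqref{def:gq}), the difference $G_\zeta-\Phi$ is harmonic on all of $\R^d$, hence smooth; the layer potential built from this smooth kernel has a normal derivative continuous across $\partial\Omega$. Thus the jump of $\partial(S_\zeta f)/\partial\nu$ equals that of $\partial(S^{\mathrm{cl}}f)/\partial\nu$, which by the classical jump relation for the harmonic single layer potential on a Lipschitz domain equals $f$ a.e.\ on $\partial\Omega$ (obtained for $f\in L^2$ and extended to $f\in H^{-1/2}(\partial\Omega)$ by density). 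Chaining the two reductions yields \eqref{id:jumps2}.

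I expect the main obstacle to be the rigorous treatment of the nontangential boundary limits at the low regularity $f\in H^{-1/2}(\partial\Omega)$ on a merely Lipschitz boundary: one must justify that the traces $\partial u/\partial\nu_\pm$ exist and that the two reductions above are valid at this function-space level. For the remainder $Vf$ this is supplied directly by the $H^2(\tilde\Omega)$ bound \eqref{eq:GqH2}, whereas for the classical single layer the existence of nontangential limits and the jump formula on Lipschitz domains rest on the layer-potential theory for the Laplacian; the harmonicity of $G_\zeta-\Phi$ then transfers these facts to $S_\zeta$, and hence to $S^q_\zeta$.
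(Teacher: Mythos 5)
Your proposal is correct and follows essentially the same route as the paper: the key step, identical in both, is to use identity \eqref{id:ssq} together with the $H^2(\tilde\Omega)$ bound \eqref{eq:GqH2} to show that $S^q_\zeta f - S_\zeta f$ has no jump in its normal derivative, reducing everything to the $q=0$ case. The only difference is that you additionally spell out the reduction from the Faddeev kernel to the classical harmonic single layer via the harmonicity of $G_\zeta-\Phi$, a step the paper absorbs into its citations of Nachman's Lemma~2.4 and the Verchota-based extension to $H^{-1/2}(\partial\Omega)$ data on Lipschitz boundaries.
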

\begin{proof}
The proof for $q= 0$, $f \in H^{1/2}(\partial \Omega)$ and $\partial \Omega \in C^{1,1}$ is given in \cite[Lemma 2.4]{nachman1988}. Based on \cite{verchota1984}, using the same arguments as in \cite[Lemma 7.1]{Nachman1996}, this can be extended to $f \in H^{-1/2}(\partial \Omega)$ and $\partial \Omega$ Lipschitz. For $q \neq 0$, note that if $\tilde \Omega$ is a bounded $C^{1,1}$ neighborhood of $\Omega$, the right hand side of  identity \eqref{id:ssq} is in $H^2(\tilde \Omega)$ by Lemma~\ref{lem:Sqbound}. Thus
\begin{equation*}
\frac{\partial}{\partial \nu_-}\left(S^q_\zeta - S_\zeta \right)f = \frac{\partial}{\partial \nu_+}\left(S^q_\zeta - S_\zeta\right)f,
\end{equation*}
and the proof follows from the  corresponding result for $q=0$.
\end{proof}

\begin{lemma}\label{lem:jump}
Let $\zeta\in\C^d$ be such that $|\zeta|\ge c_1$, where $c_1$ is given by Lemma~\ref{lem:Sqbound},  $f \in H^{1/2}(\partial \Omega)$ and $v = D^q_\zeta f$. Then, the nontangential limits $v_+ (v_-)$ of $v$ as we approach the boundary from outside (respectively inside) $\Omega$ exist and satisfy:
\begin{equation*}
v_\pm (x) = \pm \frac{1}{ 2} f(x) + B^q_\zeta f(x), \quad \text{for a.e. } x \in \partial \Omega.
\end{equation*}
\end{lemma}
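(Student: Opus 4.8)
The plan is to reduce the jump formula for the generalized double layer potential $D^q_\zeta$ to the corresponding classical formula for the Faddeev double layer $D_\zeta$ (the case $q=0$), exactly in the spirit of the single layer argument in Lemma~\ref{lem:jumps}. The key point is that $D^q_\zeta f$ differs from $D_\zeta f$ by a volume term that is regular enough to carry no jump across $\partial\Omega$.

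First I would differentiate the kernel identity \eqref{id:ggq} with respect to $\nu_y$ and integrate against $f\,d\sigma$ over $\partial\Omega$. Interchanging the order of integration yields the double-layer analogue of \eqref{id:ssq},
\[
D^q_\zeta f(x) - D_\zeta f(x) = -\int_{\R^d}G^q_\zeta(x,z)\,q(z)\,(D_\zeta f)(z)\,dz = -\bigl(G^q_\zeta (q\,D_\zeta f)\bigr)(x),
\]
where $G^q_\zeta$ on the right is the volume operator. Since $\supp(q)\subseteq\Omega$ and, for $f\in H^{1/2}(\partial\Omega)$, the interior restriction $D_\zeta f|_\Omega$ is harmonic with boundary trace $-\tfrac12 f + B_\zeta f\in H^{1/2}(\partial\Omega)$, elliptic regularity gives $D_\zeta f|_\Omega\in H^1(\Omega)\subseteq L^2(\Omega)$, so that $q\,D_\zeta f\in L^2(\Omega)$. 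Then Lemma~\ref{lem:Sqbound} applies and the correction term $w := G^q_\zeta(q\,D_\zeta f)$ belongs to $H^2(\tilde\Omega)$ for a $C^{1,1}$ neighborhood $\tilde\Omega$ of $\Omega$; in particular $w$ has a well-defined two-sided continuous trace on $\partial\Omega$, so $w_+ = w_-$ and $w$ contributes no jump.

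Next I would invoke the classical jump relation for the $q=0$ double layer potential,
\[
(D_\zeta f)_\pm(x) = \pm\tfrac12 f(x) + B_\zeta f(x),\qquad \text{a.e. } x\in\partial\Omega,
\]
which holds on Lipschitz boundaries by the same arguments (based on \cite{verchota1984} and \cite[Lemma 7.1]{Nachman1996}) used for the single layer case in Lemma~\ref{lem:jumps}. Taking nontangential limits in the decomposition $D^q_\zeta f = D_\zeta f - w$ and using that $w$ is continuous across $\partial\Omega$, I obtain
\[
(D^q_\zeta f)_\pm = \pm\tfrac12 f + B_\zeta f - w\big|_{\partial\Omega} = \pm\tfrac12 f + B^q_\zeta f,
\]
where the last equality follows because the kernel decomposition identifies $B^q_\zeta f = B_\zeta f - w|_{\partial\Omega}$, with no principal value needed for the regular correction $w$.

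I expect the main obstacle to be the rigorous justification that the correction term carries no jump: namely, confirming $q\,D_\zeta f\in L^2(\Omega)$ so that Lemma~\ref{lem:Sqbound} is applicable, together with the Fubini interchange producing the clean decomposition $D^q_\zeta f - D_\zeta f = -G^q_\zeta(q\,D_\zeta f)$. Both points rest on the interior $H^1$ regularity of the classical double layer potential for $H^{1/2}$ data on a Lipschitz domain, which in turn uses the boundedness of $B_\zeta$ on $H^{1/2}(\partial\Omega)$.
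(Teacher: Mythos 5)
Your proof follows essentially the same route as the paper's: decompose $D^q_\zeta f = D_\zeta f - G^q_\zeta(q\,D_\zeta f)$ via the kernel identity \eqref{id:ggq}, note that the same volume correction appears in $B^q_\zeta f$ and is regular across $\partial\Omega$ so it carries no jump, and invoke the classical $q=0$ jump relation extended to Lipschitz boundaries via \cite{verchota1984}. The only difference is that you spell out why the correction term is continuous across the boundary (the $H^2(\tilde\Omega)$ bound from Lemma~\ref{lem:Sqbound} applied to $q\,D_\zeta f\in L^2(\Omega)$), a point the paper leaves implicit.
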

\begin{proof}
For $q=0$ this was proved in \cite[Lemma 2.5]{nachman1988} for $f \in H^{3/2}(\partial \Omega)$ and $\partial \Omega \in C^{1,1}$ but using the results of \cite{verchota1984} it can be extended to $f \in H^{1/2}(\partial \Omega)$ and $\partial \Omega$ Lipschitz. For $q \neq 0$, using identity \eqref{id:ggq} we have
\begin{align*}
D^q_\zeta f(x) &= D_\zeta^0 f(x) - \int_{\R^d}G^q_\zeta(x,z)q(z)D_\zeta^0 f(z)dz,\quad x \in \R^d \setminus \partial \Omega, \\
B^q_\zeta f(x) &= B_\zeta^0 f(x) - \int_{\R^d}G^q_\zeta(x,z)q(z)D_\zeta^0 f(z)dz, \quad x \in \partial \Omega.
\end{align*}
The result for $q=0$ directly gives
\begin{align*}
v_{\pm}(x) &= \pm \frac 1 2 f(x)+B_\zeta^0 f(x)- \int_{\R^d}G^q_\zeta(x,z)q(z)D_\zeta^0 f(z)dz  = \pm \frac 1 2 f(x)+B^q_\zeta f(x),
\end{align*}
as desired.
\end{proof}

We now come to the main result of the section.
\begin{proposition}\label{prop:Sqinv}
Let $D>0$ and  $\Omega$ be a bounded domain with Lipschitz boundary such that $\Omega' = \R^d \setminus \Omega$ is connected. Let $\zeta\in\C^d$ be such that $|\zeta|\ge c_1$, where $c_1$ is given by Lemma~\ref{lem:Sqbound}, and $q \in L^{\infty}(\R^d)$ satisfy the assumptions at the beginning of the section. 

Then the operator $S^q_\zeta \colon H^{-1/2}(\partial \Omega) \to H^{1/2}(\partial \Omega)$ is invertible with bounded inverse and
\[
\|(S^q_\zeta)^{-1}\|\le c\bigl(\Omega,R,D,\|(-\Delta+q)^{-1}\|_{H^{-1}(\Omega)\to H^1_0(\Omega)}\bigr).
\]
\end{proposition}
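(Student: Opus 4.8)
The plan is to prove invertibility by establishing injectivity directly and then constructing the inverse explicitly through an interior and an exterior Dirichlet problem, the latter yielding the quantitative norm bound. One could alternatively combine injectivity with a Fredholm argument: identity \eqref{id:ssq} exhibits $S^q_\zeta - S_\zeta$ as a smoothing perturbation of the unperturbed single layer operator $S_\zeta$, so $S^q_\zeta$ would be Fredholm of index zero and injectivity would force invertibility. However, that route gives no control of the inverse in terms of $\|(-\Delta+q)^{-1}\|_{H^{-1}(\Omega)\to H^1_0(\Omega)}$, which is the point of the estimate, so I prefer the constructive approach.

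First I would prove injectivity. Suppose $S^q_\zeta f = 0$ for some $f \in H^{-1/2}(\partial\Omega)$, and set $u = S^q_\zeta f$ on all of $\R^d$. Since the single layer potential is continuous across $\partial\Omega$, with boundary value $S^q_\zeta f$ (see the discussion after \eqref{def:gq}), we have $u|_{\partial\Omega}=0$ from both sides. In $\Omega$ the function $u$ solves $(-\Delta+q)u=0$ with zero Dirichlet data, hence $u=0$ in $\Omega$ by \eqref{hyp:dir}. In $\Omega'$ it solves $\Delta u = 0$ (recall $\supp q \subseteq \Omega$) with zero boundary value and the generalized Sommerfeld radiation condition built into $G^q_\zeta$, hence $u=0$ in $\Omega'$ by the uniqueness in Lemma~\ref{lem:ext}. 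Then both nontangential normal derivatives $\partial u/\partial\nu_\pm$ vanish, and the jump relation \eqref{id:jumps2} of Lemma~\ref{lem:jumps} gives $f = \partial u/\partial\nu_- - \partial u/\partial\nu_+ = 0$. Thus $S^q_\zeta$ is injective.

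For surjectivity together with the norm bound, given $g \in H^{1/2}(\partial\Omega)$ I would build a preimage by solving two Dirichlet problems. Let $u_i \in H^1(\Omega)$ solve the interior problem $(-\Delta+q)u_i=0$, $u_i|_{\partial\Omega}=g$; this is uniquely solvable by \eqref{hyp:dir}, and writing $u_i = \tilde g - (-\Delta+q)^{-1}(-\Delta+q)\tilde g$ for a bounded $H^1(\Omega)$-extension $\tilde g$ of $g$ yields $\|u_i\|_{H^1(\Omega)} \le c\bigl(\Omega,R,\|(-\Delta+q)^{-1}\|_{H^{-1}\to H^1_0}\bigr)\|g\|_{H^{1/2}(\partial\Omega)}$. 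Let $u_e$ solve the exterior problem of Lemma~\ref{lem:ext} with datum $g$, whose solution operator is bounded with constant depending only on $\Omega$, $R$ and $D$. Gluing $u:=u_i$ in $\Omega$ and $u:=u_e$ in $\Omega'$ gives a function continuous across $\partial\Omega$; since $\Delta u = qu \in L^2(\Omega)$ interiorly and $u_e \in H^2(\Omega'_\rho)$ exteriorly, the conormal traces lie in $H^{-1/2}(\partial\Omega)$ and are controlled by the above $H^1$ and $H^2$ bounds. Hence $f := \partial u/\partial\nu_- - \partial u/\partial\nu_+$ satisfies $\|f\|_{H^{-1/2}(\partial\Omega)} \le c\bigl(\Omega,R,D,\|(-\Delta+q)^{-1}\|_{H^{-1}\to H^1_0}\bigr)\|g\|_{H^{1/2}(\partial\Omega)}$.

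It remains to check that $S^q_\zeta f = g$, for which I would invoke Green's representation formula with the Green function $G^q_\zeta$, which satisfies $(-\Delta_x+q)G^q_\zeta(\cdot,y)=\delta_y$, separately on $\Omega$ and on $\Omega'$. In the exterior region the radiation condition is exactly what annihilates the boundary-at-infinity contribution, while in the interior such a term is absent; adding the two representations and using continuity of $u$ across $\partial\Omega$ cancels the double layer contributions, leaving $u(x) = \int_{\partial\Omega}G^q_\zeta(x,y)\,\bigl(\partial u/\partial\nu_- - \partial u/\partial\nu_+\bigr)(y)\,d\sigma(y) = S^q_\zeta f(x)$. Taking the trace gives $S^q_\zeta f = u|_{\partial\Omega} = g$, so $S^q_\zeta$ is onto and $(S^q_\zeta)^{-1}g = f$ obeys the desired estimate. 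I expect the main obstacle to be the rigorous justification of this representation formula at low regularity — making the conormal traces and Green's identity meaningful for a Lipschitz boundary and $H^{-1/2}$ densities, and verifying that the radiation condition genuinely kills the term at infinity — together with confirming that the exterior estimate introduces no dependence beyond $\Omega$, $R$ and $D$.
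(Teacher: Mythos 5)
Your proposal is correct and follows essentially the same route as the paper: injectivity via interior/exterior uniqueness plus the jump relation \eqref{id:jumps2}, surjectivity by exhibiting the inverse as $\Lambda_q-\Lambda_q^+$ through Green's representation with $G^q_\zeta$ and the exterior problem of Lemma~\ref{lem:ext}, and the norm bound from the elliptic estimates for the two Dirichlet-to-Neumann maps. The only cosmetic difference is that you cancel the double-layer contributions in the bulk using continuity of the glued solution and take the trace at the end, whereas the paper takes traces of each representation separately via the double-layer jump formulas of Lemma~\ref{lem:jump} and subtracts the resulting operator identities \eqref{eq:out} and \eqref{eq:in}.
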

\begin{proof}
The proof is inspired by \cite[Lemma A.7]{nachman1988}.

For the injectivity we follow the argument at the beginning of the proof of \cite[Theorem 1.6, \S6]{nachman1988}. Assume $S^q_\zeta f = 0$ on $\partial \Omega$. Then $u = S^q_\zeta f$ is a Dirichlet eigenfunction of $-\Delta + q$ in $\Omega$, and so $u = 0$ in $\Omega$ by the assumptions on $q$. On the other hand, $u$ solves the exterior Dirichlet problem of Lemma \ref{lem:ext} with homogeneous conditions (the Sommerfeld radiation condition can be checked as in \cite[Lemma 2.4]{nachman1988}) and so $u = 0$ in $\Omega'$. This means that both $\partial u /\partial \nu^+$ and $\partial u /\partial \nu^-$ vanish on $\partial \Omega$. By the jump formula \eqref{id:jumps2}, $f$ mush vanish as well.

 In order to prove surjectivity we construct  an inverse explicitly. 
Thanks to Lemma~\ref{lem:ext}, we can define the Dirichlet-to-Neumann map $\Lambda_q^+ f = \frac{\partial u}{\partial \nu_+}$ for $f\in H^{1/2}(\partial\Omega)$, where $u$ is the unique solution to the exterior problem given in Lemma~\ref{lem:ext}. Now applying Green's formula to $G^q_\zeta(x,y)$ and $u(y)$ in $\Omega'_\rho$ and letting $\rho \to +\infty$ we obtain, using the generalized radiation condition:
\begin{equation*}
u(x) = -\int_{\partial \Omega}\left( G^q_\zeta(x,y)\frac{\partial u}{\partial \nu_y}(y)-u(y)\frac{\partial G^q_\zeta}{\partial \nu_y}(x,y) \right)d\sigma(y) \quad \text{for a.e. } x \in \Omega'.
\end{equation*}
Taking the trace on the boundary and using Lemma \ref{lem:jump} we obtain
\begin{equation}\label{eq:out}
S^q_\zeta \Lambda_q^+ = -\frac{1}{2}I +B^q_\zeta,
\end{equation} 	
where $I$ denotes the identity operator on $H^{1/2}(\partial\Omega)$.

Now let $u'$ be the unique solution of $(-\Delta + q) u' = 0$ in $\Omega$ and $u'|_{\partial \Omega} =f$. Applying again Green's formula to $G^q_\zeta(x,y)$ and $u'(y)$ for $x \in \Omega$ we obtain
\begin{equation*}
u'(x) = \int_{\partial \Omega}\left( G^q_\zeta(x,y)\frac{\partial u'}{\partial \nu_y}(y)-u'(y)\frac{\partial G^q_\zeta}{\partial \nu_y}(x,y) \right)d\sigma(y).
\end{equation*}
Letting $x$ approach the boundary nontangentially inside $\Omega$ we find, using again Lemma \ref{lem:jump},
\begin{equation}\label{eq:in}
S^q_\zeta \Lambda_q = \frac{1}{2}I +B^q_\zeta.
\end{equation}
Now, identities \eqref{eq:out} and \eqref{eq:in} give $S^q_\zeta (\Lambda_q - \Lambda_q^+) = I$, which show that $S^q_\zeta$ is surjective and that its inverse is $\Lambda_q - \Lambda_q^+$.

Finally, the boundedness of $(S^q_\zeta)^{-1}$ comes from the estimates
\begin{align*}
\|\Lambda_q f\|_{H^{-1/2}(\partial \Omega)} &\leq c \|f\|_{H^{1/2}(\partial \Omega)},\\
\|\Lambda^+_q f\|_{H^{-1/2}(\partial \Omega)} &\leq c \|f\|_{H^{1/2}(\partial \Omega)},
\end{align*}
which follow from  classical elliptic estimates, where $c>0$ depends only on $\Omega$, $R$, $D$ and $\|(-\Delta+q)^{-1}\|_{H^{-1}(\Omega)\to H^1_0(\Omega)}$.
\end{proof}

\bibliographystyle{plain}
\bibliography{CS}

\end{document}